\theoremstyle{plain}
\date{\today}
\title[IP-Dirichlet measures via generalized Riesz products]{IP-Dirichlet measures and IP-rigid dynamical systems: an approach via generalized Riesz products}
\author{Sophie Grivaux}
\address{CNRS,
Laboratoire Paul Painlev\' e, UMR 8524, Universit\'e  Lille 1, Cit\' e Scientifique, 59655 Villeneuve d'Ascq
Cedex, France}
\email{grivaux@math.univ-lille1.fr}
\subjclass{37A25, 42A16, 42A55, 37A45}
\keywords{Dirichlet and IP-Dirichlet measures, rigid and IP-rigid weakly mixing dynamical systems, generalized Riesz products}
\def\T{\ensuremath{\mathbb T}}
\def\R{\ensuremath{\mathbb R}}
\def\Z{\ensuremath{\mathbb Z}}
\def\C{\ensuremath{\mathbb C}}
\def\N{\ensuremath{\mathbb N}}
\newcommand{\op}{operator}
\newcommand{\wrt}{with respect to}
\newcommand{\proba}{probability}
\newcommand{\mea}{measure}
\newcommand{\mpt}{measure-preserving transformation}
\newcommand{\wmx}{weakly mixing}
\newcommand{\rg}{rigid}
\newcommand{\ds}{dynamical system}
\newcommand{\ipd}{IP-Dirichlet}
\newcommand{\seq}{sequence}
\newcommand{\ifff}{if and only if}
\newtheorem{theorem}{Theorem}[section]
\newtheorem{lemma}[theorem]{Lemma}
\newtheorem{proposition}[theorem]{Proposition}
\newtheorem{corollary}[theorem]{Corollary}
\theoremstyle{definition}}
\theoremstyle{definition}}
\theoremstyle{definition}\newtheorem{definition}[theorem]{Definition}}
\theoremstyle{definition}}
\theoremstyle{definition}}
\theoremstyle{definition}\newtheorem*{FFC Criterion}{Frequent
Faber-hypercyclicity Criterion}}
\newtheorem*{Hypercyclicity Criterion}{Hypercyclicity Criterion}
{\theoremstyle{definition}\newtheorem*{GS Criterion}{Godefroy-Shapiro
Criterion}}
\def\piednote#1{\let\oldfn=\thefootnote\def\thefootnote{}\footnote{\noindent#1}%
\addtocounter{footnote}{-1}\def\thefootnote{\oldfn}}
\def\wh{\hat}
\begin{document}

\begin{abstract}
If $(n_{k})_{k\ge 1}$ is a strictly increasing \seq\ of integers, a continuous probability measure $\sigma  $ on the unit circle $\T$ is said to be IP-Dirichlet with respect to $(n_{k})_{k\ge 1}$ if $\hat{\sigma  }(\sum_{k\in F}n_{k})\to 1 $ as $F$ runs over all non-empty finite subsets $F$ of $\N$ and the minimum of $F$ tends to infinity. IP-Dirichlet measures and their connections with IP-rigid dynamical systems have been investigated recently by Aaronson, Hosseini and Lema\'nczyk. We simplify and generalize some of their results, using an approach involving generalized Riesz products.
\end{abstract}
\maketitle

\section{Introduction}

We will be interested in this paper in \ipd\ \proba\ \mea s on the unit circle $\T=\{\lambda \in \C \textrm{ ; } |\lambda |=1\}$ \wrt\ a strictly increasing \seq\ $(n_{k})_{k\ge 1}$ of positive integers. Recall that a \proba\ \mea\ $\mu $ on $\T$ is said to be a \emph{Dirichlet \mea}\ when there exists a strictly increasing \seq\ $(p_{k})_{k\ge 1}$ of integers such that the monomials $z^{p_{k}}$ tend to $1$ on $\T$ as $k$ tends to infinity \wrt\  the norm of $L^{p}(\mu)$, where $1\le p<+\infty$. This is equivalent to requiring that the Fourier coefficients $\hat{\mu}(p_{k})$ of the \mea\ $\mu$ tend to $1$ as $k$ tends to infinity. If $(n_{k})_{k\ge 1}$ is a (fixed) striclty increasing \seq\ of integers, we say that $\mu $ \emph{is a Dirichlet \mea\ \wrt\ the \seq}\ $(n_{k})_{k\ge 1}$ if  $\hat{\mu}(n_{k})\to 1$ as $k \to +\infty$. Let $\mathcal{F}$ denote the set of all non-empty finite subsets of $\N$. The \mea\ $\mu$ is said to be \emph{\ipd\ \wrt\ the \seq}\ $(n_{k})_{k\ge 1}$ if $$\hat{\mu}(\sum_{k\in F}n_{k})\to 1 \quad\ \textrm{ as } \min(F)\to +\infty, \; F\in \mathcal{F}.$$
In other words: for all $\varepsilon >0$ there exists a $k_{0}\ge 0$ such that whenever $F$ is a finite subset of $\{k_{0}, k_{0}+1, \ldots\}$,
$$|\hat{\mu}(\sum_{k\in F}n_{k})-1|\ge \varepsilon .$$ 
\par\smallskip
Our starting point for this paper is the work \cite{AHL} by Aaronson, Hosseini and Lema\'nczyk, where \ipd\ \mea s are studied in connection with rigidity phenomena for \ds s. Let $(X, \mathcal{B},m)$ denote a standard non-atomic \proba\ space and let $T$ be a \mpt\ of $(X,\mathcal{B},m)$. Let again $(n_{k})_{k\ge 1}$ be a strictly increasing sequence of integers.

\begin{definition}\label{def1}
 The transformation $T$ is said to be \emph{\rg\ \wrt}\ $(n_{k})_{k\ge 1}$ if
  $m(T^{-n_{k}}A\triangle A)\to 0$ as $n_{k}\to +\infty $ for all sets $A\in \mathcal{B}$, or, equivalently, if for all functions $f\in L^{2}(X, \mathcal{B},m)$,
  $||f\circ T^{n_{k}}-f||_{L^{2}(X, \mathcal{B},m)}\to 0$ as ${k}\to +\infty $.
\end{definition}

If we denote by $\sigma  _{T}$ the restricted spectral type of $T$, i.e. the spectral type of the Koopman \op\ $U_{T}$ of $T$ acting on functions of $L^{2}(X, \mathcal{B},m)$ of mean zero, then it is not difficult to see that $T$ is \rg\ \wrt\ $(n_{k})_{k\ge 1}$ \ifff\ $\sigma  _{T}$ is a Dirichlet \mea\ \wrt\ the sequence $(n_{k})_{k\ge 1}$.

\par\smallskip

Rigidity phenomena for \wmx\ transformations have been investigated recently in the papers \cite{BDLR} and \cite{EG}, where in particular the following question was considered: given a sequence $(n_{k})_{k\ge 1}$ of integers, when is it true that there exists a \wmx\ transformation $T$ of some \proba\ space $(X, \mathcal{B},m)$ which is \rg\ \wrt\ $(n_{k})_{k\ge 1}$? When this is true, we say that $(n_{k})_{k\ge 1}$ is a \emph{\rg ity \seq}. It was proved in \cite{BDLR} and \cite{EG} that $(n_{k})_{k\ge 1}$ is a \rg ity \seq\ \ifff\ there exists a continuous \proba\ \mea\ $\sigma  $ on $\T$ which is Dirichlet \wrt\ $(n_{k})_{k\ge 1}$.

\par\smallskip

It is then natural to consider IP-rigidity for (\wmx)\ \ds s. This study was initiated in \cite{BDLR} and continued in \cite{AHL}.

\begin{definition}\label{def2}
The system $(X, \mathcal{B},m;T)$ is said to be \emph{IP-\rg\ \wrt\ the sequence} $(n_{k})_{k\ge 1}$ if for every $A\in \mathcal{B}$,
 $$m(T^{\sum_{k\in F}n_{k}}A\triangle A)\to 0 \quad \textrm{ as } \min (F)\to+\infty, \; F\in \mathcal{F}.$$
\end{definition}

Just as with the notion of \rg ity, $T$ is IP-\rg\ \wrt\ $(n_{k})_{k\ge 1}$ \ifff\ $\sigma  _{T}$ is an \ipd\ \mea\ \wrt\ $(n_{k})_{k\ge 1}$. Moreover, if we say that $(n_{k})_{k\ge 1}$ is an \emph{IP-\rg ity \seq}\ when there exists a \wmx\ \ds\ $(X, \mathcal{B},m;T)$ which is IP-\rg\ \wrt\ $(n_{k})_{k\ge 1}$, then IP-\rg ity \seq s can be characterized in a similar fashion as \rg ity sequences (\cite[Prop. 1.2]{AHL}): $(n_{k})_{k\ge 1}$ is an IP-\rg ity \seq\ \ifff\ there exists a continuous \proba\ \mea\ $\sigma  $ on $\T$ which is IP-Dirichlet \wrt\ $(n_{k})_{k\ge 1}$.

\par\smallskip

IP-Dirichlet \mea s are studied in detail in the paper \cite{AHL}, and one of the important features which is highlighted there is the connection between the existence of a \mea\ which is \ipd\ \wrt\ a certain \seq\ $(n_{k})_{k\ge 1}$ of integers, and the properties of the subgroups $G_{p}((n_{k}))$ of the unit circle associated to $(n_{k})_{k\ge 1}$: for $1\le p<+\infty$,
$$G_{p}((n_{k}))=\{\lambda \in \T \textrm{ ; } \sum_{k\ge 1}|\lambda ^{n_{k}}-1|^{p}<+\infty\}$$
and for $p=+\infty$
$$G_{\infty}((n_{k}))=\{\lambda \in\T \textrm{ ; } |\lambda ^{n_{k}}-1|\to 0 \textrm{ as } k\to +\infty\}.$$

The main result of \cite{AHL} runs as follows:

\begin{theorem}\cite[Th. 2]{AHL}\label{th0}
 Let $(n_{k})_{k\ge 1}$ be a strictly increasing sequence of integers. If $\mu $ is a \proba\ \mea\  on $\T$ which is \ipd\ \wrt\ $(n_{k})_{k\ge 1}$, then $\mu (G_{2}((n_{k})))=1.$
\end{theorem}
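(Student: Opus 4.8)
The plan is to exploit the multiplicative structure of the IP-Dirichlet condition by averaging the Fourier coefficients $\hat{\mu}(\sum_{k\in F}n_{k})$ over random finite subsets $F$, which naturally produces a generalized Riesz product. The starting observation is the elementary expansion identity: for a threshold $k_{0}$ and any $N\ge k_{0}$, expanding the product and integrating termwise against $\mu$ gives
$$\frac{1}{2^{N-k_{0}+1}}\sum_{F\subseteq\{k_{0},\ldots,N\}}\hat{\mu}\left(\sum_{k\in F}n_{k}\right)=\int_{\T}\prod_{k=k_{0}}^{N}\frac{1+\lambda^{n_{k}}}{2}\,d\mu(\lambda),$$
where the empty set contributes $\hat{\mu}(0)=1$ and we use $\hat{\mu}(m)=\int_{\T}\lambda^{m}\,d\mu$. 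This is the averaging of the IP-sums with each index chosen independently with probability $1/2$, and it is the heart of the argument.

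First I would fix $\varepsilon>0$ and invoke the IP-Dirichlet hypothesis to pick $k_{0}$ so that $\mathrm{Re}\,\hat{\mu}(\sum_{k\in F}n_{k})>1-\varepsilon$ for every non-empty $F\subseteq\{k_{0},k_{0}+1,\ldots\}$. Since the empty-set term also equals $1$, taking real parts in the identity above shows $\mathrm{Re}\int_{\T}\prod_{k=k_{0}}^{N}\frac{1+\lambda^{n_{k}}}{2}\,d\mu>1-\varepsilon$ for all $N\ge k_{0}$. Bounding the real part of a complex number by its modulus and writing $P_{N}(\lambda)=\prod_{k=k_{0}}^{N}\left|\frac{1+\lambda^{n_{k}}}{2}\right|$, this yields $\int_{\T}P_{N}\,d\mu>1-\varepsilon$ for every $N\ge k_{0}$.

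Each factor of $P_{N}$ lies in $[0,1]$, so the sequence $P_{N}$ is non-increasing in $N$ and bounded by $1$; it converges pointwise to $P_{\infty}(\lambda)=\prod_{k\ge k_{0}}\left|\frac{1+\lambda^{n_{k}}}{2}\right|$, and dominated convergence gives $\int_{\T}P_{\infty}\,d\mu\ge 1-\varepsilon$. The crucial step is then to identify where $P_{\infty}$ is positive. Starting from $\left|\frac{1+\lambda^{n_{k}}}{2}\right|^{2}=1-\frac{1}{4}|\lambda^{n_{k}}-1|^{2}$ and the elementary two-sided bound $\frac{x}{2}\le 1-\sqrt{1-x}\le x$ for $x\in[0,1]$, one obtains $\frac{1}{8}|\lambda^{n_{k}}-1|^{2}\le 1-\left|\frac{1+\lambda^{n_{k}}}{2}\right|\le\frac{1}{4}|\lambda^{n_{k}}-1|^{2}$. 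Hence the infinite product of terms in $[0,1]$ converges to a strictly positive value if and only if $\sum_{k}(1-|\frac{1+\lambda^{n_{k}}}{2}|)<+\infty$, which by the comparison is exactly $\sum_{k}|\lambda^{n_{k}}-1|^{2}<+\infty$, i.e. $\lambda\in G_{2}((n_{k}))$ (finitely many initial terms being irrelevant). Thus $P_{\infty}=0$ off $G_{2}((n_{k}))$.

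It then remains only to combine these facts: since $0\le P_{\infty}\le 1$ and $P_{\infty}$ vanishes outside $G_{2}((n_{k}))$, we get $1-\varepsilon\le\int_{\T}P_{\infty}\,d\mu=\int_{G_{2}((n_{k}))}P_{\infty}\,d\mu\le\mu(G_{2}((n_{k})))$, and letting $\varepsilon\to 0$ forces $\mu(G_{2}((n_{k})))=1$. I expect the genuine work to lie in the dictionary $P_{\infty}(\lambda)>0\iff\lambda\in G_{2}((n_{k}))$ — in particular the convergence of the infinite product with factors in $[0,1]$ and the boundary case $\lambda^{n_{k}}=-1$ where a factor vanishes — whereas the averaging identity, the passage from $\mathrm{Re}$ to modulus, and the monotone limit are routine once the generalized Riesz product $\prod_{k}\frac{1+\lambda^{n_{k}}}{2}$ is brought into play.
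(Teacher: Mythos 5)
Your proposal is correct and follows essentially the same route as the paper: the key step in both is the averaging identity $2^{-(N-k_{0}+1)}\sum_{F\subseteq\{k_{0},\dots,N\}}\hat{\mu}(\sum_{k\in F}n_{k})=\int_{\T}\prod_{k=k_{0}}^{N}\frac{1}{2}(1+\lambda^{n_{k}})\,d\mu$, followed by identifying the set where the modulus product $\prod_{k}\bigl|\frac{1}{2}(1+\lambda^{n_{k}})\bigr|$ stays positive with (a subset of) $G_{2}((n_{k}))$. Your variations --- taking real parts and passing to the modulus rather than splitting the integral over $C$ and its complement, and using the identity $\bigl|\frac{1}{2}(1+\lambda^{n_{k}})\bigr|^{2}=1-\frac{1}{4}|\lambda^{n_{k}}-1|^{2}$ in place of the cosine computation --- are only cosmetic, and the one direction of your ``dictionary'' that the argument actually uses (positivity of the product implies $\lambda\in G_{2}((n_{k}))$) is the correct one even in the degenerate case $\lambda^{n_{k}}=-1$.
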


The converse of Theorem \ref{th0} is false \cite[Ex. 4.2]{AHL}, as one can construct a sequence $(n_{k})_{k\ge 1}$ and a \proba\ \mea\ $\mu $ on $\T$ which is continuous, supported on $G_{2}((n_{k}))$ (which is uncountable), and not \ipd\ \wrt\ $(n_{k})_{k\ge 1}$. On the other hand, if $\mu $ is a continuous \proba\ \mea\  such that $\mu(G_{1}((n_{k})))=1$, then $\mu$ is \ipd\ \wrt\ $(n_{k})_{k\ge 1}$ \cite[Prop. 1]{AHL}. Again, this is not a necessary and sufficient condition for being \ipd\ \wrt\ $(n_{k})_{k\ge 1}$ \cite{AHL}: if 
$(n_{k})_{k\ge 1}$ is the \seq\ of integers defined by $n_{1}=1$ and $n_{k+1}=kn_{k}+1$ for each $k\ge 1$, then there exists a continuous probability \mea\ $\sigma  $ on $\T$ which is \ipd\ \wrt\ $(n_{k})_{k\ge 1}$, although $G_{1}((n_{k}))=\{1\}$. Numerous examples of \seq s $(n_{k})_{k\ge 1}$ \wrt\ which there exist \ipd\ continuous \proba\ \mea s are given in \cite{AHL} as well. For instance, such \seq s are characterized among \seq s $(n_{k})_{k\ge 1}$ such that $n_{k}$ divides $n_{k+1}$ for each $k$, and among \seq s which are denominators of the best rational approximants $\frac{p_{k}}{q_{k}}$ of an irrational number $\alpha \in (0,1)$, obtained via the continued fraction expansion. It is also proved in \cite{AHL} that sequences $(n_{k})_{k\ge 1}$ such that the series $\sum_{k\ge 1}(n_{k}/n_{k+1})^{2}$ is convergent admit a continuous \ipd\ \proba\ \mea.

\par\smallskip

Our aim in this paper is to simplify and generalize some of the results and examples of \cite{AHL}. We first present an alternative proof of Theorem \ref{th0} above, which is completely elementary and much simpler than the proof of \cite{AHL} which involves Mackey ranges over the dyadic adding machine. 
We then present a rather general way to construct \ipd\ \mea s via generalized Riesz products. The argument which we use is inspired by results from \cite{Pa} and \cite[Section 4.2]{HMP}, where generalized Riesz products concentrated on some $H_{2}$-subgroups of the unit circle are constructed. Proposition \ref{Prop3} gives a bound from below on the Fourier coefficients of these Riesz products, and this enables us to obtain in Proposition \ref{Prop5} a sufficient condition on sets $\{n_{k}\}$ of the form
\begin{equation}\label{eq2}
 \{n_{k}\}=\bigcup_{k\ge 1}\{p_{k},q_{1\, k}\, p_{k},\dots,q_{r_{k},\, k}\, p_{k}\},
\end{equation}
where the $q_{j,k}$, $j=1,\ldots r_{k}$, are positive integers and the \seq\ $(p_{k})_{k\ge 1}$ is such that  $p_{k+1}>q_{r_{k},k}p_{k}$ for each $k\ge 1$, for the existence of an associated continuous generalized Riesz product which is \ipd\ \wrt\ $(n_{k})_{k\ge 1}$. This condition is best possible (Proposition \ref{Prop7}). As a consequence of Proposition \ref{Prop5}, we retrieve and improve a result of \cite{AHL} which runs as follows: if $(n_{k})_{k\ge 1}$ is such that there exists an infinite subset $S$ of $\N$ such that 
\[
\sum_{k\in S}\dfrac{n_{k}}{n_{k+1}}<+\infty \quad\textrm{and}\ n_{k}\vert n_{k+1}\ \textrm{for each}\ k\not\in S,
\]
then there exists a continuous probability measure $\sigma $ on $\T$ which is \mbox{IP-Dirichlet} with respect to $(n_{k})_{k\ge 1}$. This result is proved in \cite{AHL} by constructing a rank-one weakly mixing system which is \mbox{IP-rigid}  with respect to $(n_{k})_{k\ge 1}$. Here we get a ``dynamical system-free'' proof of this statement, where the condition $\sum_{k\in S}(n_{k}/n_{k+1})<+\infty$ is replaced by the weaker condition $\sum_{k\in S}(n_{k}/n_{k+1})^{2}<+\infty$.

\begin{theorem}\label{Cor6}
 Let $(n_{k})_{k\ge 1}$ be a strictly increasing sequence of integers for which there exists an infinite subset $S$ of $\N$ such that
\[
\sum_{k\ge 1}\, \, \Bigl( \dfrac{n_{k}}{n_{k+1}}\Bigr)^{2}<+\infty \quad\textrm{and}\ n_{k}\vert n_{k+1}\ \textrm{for each}\ k\not\in S.
\]
Then there exists a continuous generalized Riesz product $\sigma $ on $\T$ which is \mbox{IP-Dirichlet} with respect to $(n_{k})_{k\ge 1}$.
\end{theorem}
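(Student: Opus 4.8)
The plan is to derive the result from Proposition \ref{Prop5} by cutting the sequence $(n_{k})_{k\ge 1}$ along the set $S$ into ``divisibility blocks'' and fitting the resulting data into the form (\ref{eq2}). Write $S=\{s_{1}<s_{2}<\cdots\}$ and set $s_{0}=0$. For each $j\ge 1$ the indices $k$ with $s_{j-1}<k<s_{j}$ all lie outside $S$, so the divisibility hypothesis gives the chain $n_{s_{j-1}+1}\mid n_{s_{j-1}+2}\mid\cdots\mid n_{s_{j}}$; in particular $p_{j}:=n_{s_{j-1}+1}$ divides every term of the block $\{n_{s_{j-1}+1},\dots,n_{s_{j}}\}$. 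Putting $r_{j}:=s_{j}-s_{j-1}-1$ and $q_{i,j}:=n_{s_{j-1}+1+i}/p_{j}$ for $1\le i\le r_{j}$, I obtain exactly the decomposition
\[
\{n_{k}\}=\bigcup_{j\ge 1}\{p_{j},q_{1,j}\,p_{j},\dots,q_{r_{j},j}\,p_{j}\}
\]
of the form (\ref{eq2}), with all the $q_{i,j}$ positive integers.

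Next I would check the two requirements attached to (\ref{eq2}) and to Proposition \ref{Prop5}. The largest element of the $j$-th block is $q_{r_{j},j}\,p_{j}=n_{s_{j}}$ and the smallest element of the $(j+1)$-st block is $p_{j+1}=n_{s_{j}+1}$; since $(n_{k})_{k\ge 1}$ is strictly increasing, $p_{j+1}=n_{s_{j}+1}>n_{s_{j}}=q_{r_{j},j}\,p_{j}$, which is precisely the required ordering condition $p_{j+1}>q_{r_{j},j}\,p_{j}$. It remains to feed the summability hypothesis into Proposition \ref{Prop5}. The junction between the $j$-th and $(j+1)$-st blocks occurs exactly at the index $k=s_{j}\in S$, and the ratio controlling it is $q_{r_{j},j}\,p_{j}/p_{j+1}=n_{s_{j}}/n_{s_{j}+1}$. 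Thus the relevant sum over junctions is $\sum_{j\ge 1}(n_{s_{j}}/n_{s_{j}+1})^{2}=\sum_{k\in S}(n_{k}/n_{k+1})^{2}$, which is dominated by $\sum_{k\ge 1}(n_{k}/n_{k+1})^{2}<+\infty$. Since inside each block the terms $n_{i}$ form a divisibility chain, the block is compatible with the concentrated kernel used in Proposition \ref{Prop5}, and the conclusion of that proposition then furnishes a continuous generalized Riesz product $\sigma$ on $\T$ which is \ipd\ \wrt\ $(n_{k})_{k\ge 1}$.

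The genuinely delicate point is not the combinatorial decomposition but the precise matching with the hypothesis of Proposition \ref{Prop5}: one must make sure that only the inter-block (junction) ratios, indexed by $S$, enter its summability condition, the intra-block behaviour being absorbed by the divisibility structure. If, on the contrary, Proposition \ref{Prop5} also weighs the within-block ratios, I would additionally note that $\sum_{k\notin S}(n_{k}/n_{k+1})^{2}$ converges under the stated hypothesis (each such term being at most $1/4$ by divisibility, and the whole family being summable), so that the full hypothesis $\sum_{k\ge 1}(n_{k}/n_{k+1})^{2}<+\infty$ is in any case sufficient. This is exactly the strengthening of squares over first powers that, at the level of the junction sum $\sum_{k\in S}$, yields the improvement over the condition $\sum_{k\in S}(n_{k}/n_{k+1})<+\infty$ of \cite{AHL}.
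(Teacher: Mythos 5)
Your block decomposition is exactly the one the paper uses (with $p_{j}=n_{s_{j-1}+1}$ and $q_{i,j}=n_{s_{j-1}+1+i}/p_{j}$), and your verification of the ordering condition $p_{j+1}>q_{r_{j},j}\,p_{j}$ is correct. The gap is in the matching with the summability hypothesis of Proposition \ref{Prop5}, and neither of the two readings you offer is the right one. That hypothesis is the convergence of $\sum_{l}(q_{l}\,p_{l}/p_{l+1})^{2}$, where $q_{l}=q_{0,l}+q_{1,l}+\cdots+q_{r_{l},l}$ is the \emph{sum of all the multipliers of the $l$-th block}, not just the top one $q_{r_{l},l}$; so the junction ratios $n_{s_{l}}/n_{s_{l}+1}=q_{r_{l},l}\,p_{l}/p_{l+1}$ alone do not verify it. Your fallback does not repair this either: the convergence of $\sum_{k\notin S}(n_{k}/n_{k+1})^{2}$ is not the quantity Proposition \ref{Prop5} weighs, and there is no direct passage from a sum of squared consecutive ratios to the square of $q_{l}p_{l}/p_{l+1}$, which is the ratio of the \emph{total} block weight to $p_{l+1}$. (Note also that if one literally assumes $\sum_{k\ge 1}(n_{k}/n_{k+1})^{2}<+\infty$ over all $k$, then Corollary \ref{Cor4} already gives the conclusion with no divisibility hypothesis at all; the actual content of the theorem, and what the paper proves, is the version where only $\sum_{k\in S}(n_{k}/n_{k+1})^{2}$ is assumed finite.)

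The missing step is precisely where divisibility does its quantitative work. Inside the $l$-th block the ratios of consecutive terms satisfy $q_{i+1,l}/q_{i,l}=n_{s_{l-1}+2+i}/n_{s_{l-1}+1+i}\ge 2$, since these are integers $\ge 2$ by divisibility and strict monotonicity. Hence
\[
\frac{q_{l}}{q_{r_{l},l}}=\sum_{i=0}^{r_{l}}\frac{q_{i,l}}{q_{r_{l},l}}\le 1+\frac{1}{2}+\frac{1}{4}+\cdots\le 2,
\]
so that $q_{l}\le 2\,q_{r_{l},l}$ and therefore
\[
\sum_{l\ge 1}\Bigl(\frac{q_{l}\,p_{l}}{p_{l+1}}\Bigr)^{2}\le 4\sum_{k\in S}\Bigl(\frac{n_{k}}{n_{k+1}}\Bigr)^{2}<+\infty,
\]
which is exactly the hypothesis of Proposition \ref{Prop5}. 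Without this geometric-series bound your argument does not close; with it, the rest of what you wrote goes through and coincides with the paper's proof.
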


Using again sets of the form (\ref{eq2}), we
 then show that the converse of Theorem \ref{th0} is false in the strongest possible sense, thus strengthening Example 4.2 of \cite{AHL}:

\begin{theorem}\label{th1}
There exists a strictly increasing sequence $(n_{k})_{k\ge 1}$ of integers such that $G_{2}((n_{k}))$ is uncountable, but no continuous \proba\ \mea\ is \ipd\ \wrt\ $(n_{k})_{k\ge 1}$.
\end{theorem}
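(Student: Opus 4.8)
The plan is to realize $(n_{k})_{k\ge1}$ as a set of the form (\ref{eq2}) in which each block is an arithmetic progression: I fix a rapidly increasing \seq\ $(p_{k})_{k\ge1}$ and integers $r_{k}\to+\infty$, and take
$$\{n_{k}\}=\bigcup_{k\ge1}\{p_{k},2p_{k},\dots,r_{k}p_{k}\},$$
that is, $q_{j,k}=j$ for $1\le j\le r_{k}$, with $p_{k+1}>r_{k}p_{k}$ so that the enumeration is strictly increasing. The whole point of this choice is a sharp mismatch between the information carried by the individual frequencies $jp_{k}$ and that carried by their subset sums. Membership of $\lambda=e^{2\pi ix}$ in $G_{2}((n_{k}))$ only involves the quantities $\sum_{j=1}^{r_{k}}|\lambda^{jp_{k}}-1|^{2}$, which (when the distance $\|p_{k}x\|$ from $p_{k}x$ to the nearest integer is small) are of order $r_{k}^{3}\,\|p_{k}x\|^{2}$; so $G_{2}((n_{k}))$ already contains every $\lambda$ with $\|p_{k}x\|\lesssim r_{k}^{-3/2}$ for all $k$. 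By contrast, the subset sums of a single block run exactly through $\{mp_{k}:1\le m\le M_{k}\}$ with $M_{k}=r_{k}(r_{k}+1)/2$, a far longer range.

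First I would prove that $G_{2}((n_{k}))$ is uncountable by an explicit Cantor construction. Choosing $\varepsilon_{k}=c\,r_{k}^{-3/2}2^{-k/2}$, the set where $\sum_{j=1}^{r_{k}}|\lambda^{jp_{k}}-1|^{2}\le 2^{-k}$ contains, around each point of $\frac1{p_{k}}\Z$, an arc of length of order $\varepsilon_{k}/p_{k}$. Starting from $\T$ and refining, at step $k$ one keeps at least two such arcs inside each arc surviving from step $k-1$, which is possible as soon as $p_{k}$ is large enough relative to the previous arc length; letting the $p_{k}$ grow fast enough, one obtains a Cantor set on which $\sum_{k}\sum_{j=1}^{r_{k}}|\lambda^{jp_{k}}-1|^{2}\le\sum_{k}2^{-k}<+\infty$. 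This Cantor set is contained in $G_{2}((n_{k}))$, which is therefore uncountable.

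Next I would show that no continuous \proba\ \mea\ is \ipd\ \wrt\ $(n_{k})$. Suppose $\mu$ were such a \mea. Applying the \ipd\ property to sets $F$ contained in a single block (whose indices tend to infinity with $k$), and using that the within-block subset sums fill $\{mp_{k}:1\le m\le M_{k}\}$, one gets that for every $\varepsilon>0$ and all large $k$ one has $|\hat{\mu}(mp_{k})-1|<\varepsilon$ simultaneously for all $1\le m\le M_{k}$; equivalently, the images of $\mu$ under $x\mapsto\{p_{k}x\}$ concentrate near $0$ at the scale $1/M_{k}\asymp r_{k}^{-2}$. This forces $\mu$ to sit, as $k\to+\infty$, on arcs around $\frac1{p_{k}}\Z$ that are \emph{shorter}, by a factor of order $r_{k}^{-1/2}$, than those permitted by membership in $G_{2}((n_{k}))$. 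Together with Theorem \ref{th0}, which already gives $\mu(G_{2}((n_{k})))=1$, this additional concentration is incompatible with continuity: it is precisely the content of Proposition \ref{Prop7}, whose proof rests on the lower bounds for the Fourier coefficients of generalized Riesz products established in Proposition \ref{Prop3}. The conclusion is that the only \mea s meeting all these constraints are atomic, contradicting the continuity of $\mu$.

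The main obstacle is the reconciliation of the two requirements, which pull in opposite directions: to rule out all continuous \ipd\ \mea s one wants the subset-sum range $M_{k}$ as long as possible, whereas to keep $G_{2}((n_{k}))$ uncountable one wants the individual frequencies $jp_{k}$ to stay sparse. The arithmetic-progression blocks with $r_{k}\to+\infty$ resolve this tension exactly because the subset sums reach up to $M_{k}\asymp r_{k}^{2}$ while the largest frequency of the block is only $r_{k}p_{k}$; quantifying this gap — and, above all, upgrading the scale separation into a genuine proof that $\mu$ must be \emph{atomic} rather than merely singular — is the delicate point, and it is here that Proposition \ref{Prop7} does the real work. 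The remaining verifications (strict monotonicity of the enumeration, the summability estimates in the Cantor construction, and a choice of growth for $(p_{k})$ and $(r_{k})$ making both steps succeed at once) are routine once the parameters are fixed.
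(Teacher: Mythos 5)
Your choice of blocks $\{p_{k},2p_{k},\dots,r_{k}p_{k}\}$ and your Cantor-set argument for the uncountability of $G_{2}((n_{k}))$ are essentially those of the paper (which takes $r_{l}=l^{2}$ and quotes the existence of the perfect set from \cite{BaGr}, \cite{EG}); that half is fine. The genuine gap is in the second half. From the IP-Dirichlet property restricted to a single block you only obtain $|\hat{\mu}(mp_{k})-1|<\varepsilon$ for the finitely many $m\le M_{k}=r_{k}(r_{k}+1)/2$, and this does not contradict continuity: Wiener's theorem needs a bound on infinitely many frequencies along a fixed progression, and ``concentration of $\mu$ at scale $1/M_{k}$ around $\tfrac{1}{p_{k}}\Z$'' only yields singularity, not atomicity. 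Your appeal to Proposition \ref{Prop7} to close this gap cannot work: the mechanism of Proposition \ref{Prop7} is to prove $G_{2}((n_{k}))=\{1\}$ and then invoke Theorem \ref{th0}, and that route is unavailable here by design, since you need $G_{2}((n_{k}))$ uncountable. The missing idea is the paper's Lemma \ref{lem1}: one must calibrate $p_{k+1}$ to equal \emph{exactly} the largest subset sum $M_{k}p_{k}$ of block $k$, so that the subset sums of consecutive blocks concatenate without gaps and \emph{every} multiple $sp_{l}$, $s\ge 1$, is realized as $\sum_{k\in F}n_{k}$ with $\min(F)$ beyond block $l-1$. Only then does IP-Dirichletness give $|\hat{\mu}(sp_{l_{0}})|\ge 1/2$ for all $s\ge 1$, which contradicts continuity via Wiener's theorem applied to the pushforward of $\mu$ under $z\mapsto z^{p_{l_{0}}}$.

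A second, related error: your closing remark that one can ``let the $p_{k}$ grow fast enough'' to make both steps succeed points in the wrong direction. If $p_{k+1}$ grows too fast relative to $r_{k}^{2}p_{k}$ --- say fast enough that $\sum_{k}(r_{k}^{2}p_{k}/p_{k+1})^{2}<+\infty$ --- then Proposition \ref{Prop5} (with $q_{k}\asymp r_{k}^{2}/2$) \emph{produces} a continuous generalized Riesz product that is \ipd\ with respect to $(n_{k})$, and the statement you want becomes false for that choice of parameters. The growth of $p_{k}$ is therefore pinned down by the chaining condition $p_{k+1}=M_{k}p_{k}$, and one must check separately (as the paper does with $r_{l}=l^{2}$, giving $\sum_{l}r_{l}^{3}(p_{l}/p_{l+1})^{2}\asymp\sum_{l}1/l^{2}<+\infty$) that this exact ratio is still large enough for the Cantor set to land inside $G_{2}((n_{k}))$. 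Without identifying this tension and resolving it by the exact choice of $p_{k+1}$, the proof does not go through.
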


The last section of the paper gathers some observations concerning the Erd\"os-Taylor \seq\ $(n_{k})_{k\ge 1}$ defined by $n_{1}=1$ and $n_{k+1}=kn_{k}+1$, which is of interest in this context.
\par\smallskip
\textbf{Notation:}
In the whole paper, we will denote by $\{x\}$ the distance of the real number 
$x$ to the nearest integer, by $\lfloor x \rceil$ the integer which is closest to $x$ (if there are two such integers, we take the smallest one), and by $\langle x \rangle$ the quantity
$x-\lfloor x \rceil$. Lastly, we denote by $[x]$ the integer part of $x$.

\section{An alternative proof of Theorem \ref{th0}}

 Let $(n_{k})_{k \ge 1}$ be a strictly increasing sequence of integers.
 Suppose that the measure $\mu $ on $\T$ is \mbox{IP-Dirichlet} with respect to $(n_{k})_{k\ge 1}$. For every
$\varepsilon >0$ there exists an integer $k_{0}$ such that for all sets $F\in \mathcal{F}$ with $\min(F)\ge k_{0}$, $\left|\widehat{\mu}\left(\sum_{k\in F}n_{k}\right)-1\right|\le \varepsilon $. For every integer $N\ge k_{0}$, consider the quantities 
\[
\prod_{k=k_{0}}^{N}\frac{1}{2}\left(1+\lambda ^{n_{k}}\right)={2^{-(N-k_{0}+1)}}\sum_{F\subseteq\{k_{0},\dots,N\}}\lambda ^{\sum_{k\in F}n_{k}}.
\]
The notation on the righthand side of this display means that the sum is taken over all (possibly empty) finite subsets $F$ of $\{k_{0},\dots,N\}$.
Integrating with respect to $\mu $ yields that 
\[
\int_{\T }\prod_{k=k_{0}}^{N}\frac{1}{2}\left(1+\lambda ^{n_{k}}\right)d\mu (\lambda )={2^{-(N-k_{0}+1)}}\sum_{F\subseteq\{k_{0},\dots,N\}}\widehat{\mu}\bigl(\sum_{k\in F}n_{k}\bigr),
\]
so that
\begin{equation}\label{eq1}
 \Bigl|\int_{\T }\prod_{k=k_{0}}^{N}\frac{1}{2}\left(1+\lambda ^{n_{k}}\right)d\mu (\lambda )-1\Bigr|\le {2^{-(N-k_{0}+1)}}\sum_{F\subseteq\{k_{0},\dots,N\}}\Bigl|\widehat{\mu}\bigl(\sum_{k\in F}n_{k}\bigr)-1\Bigr|\le \varepsilon .
\end{equation}
Let now $$C=\{\lambda \in\T\ \textrm{ ; the infinite product}\ \prod_{k=1}^{+\infty}\frac{1}{2}|1+\lambda ^{n_{k}}|\ \textrm{converges to a non-zero limit}\}.$$ Observe that the set $C$ does not depend on $\varepsilon $ nor on $k_{0}$. For every $\lambda \in \T \setminus C$, the quantity
$\prod_{k=k_{0}}^{N}\frac{1}{2}|1+\lambda ^{n_{k}}|$ tends to $0$ as ${N\to +\infty }$, and so by the dominated convergence theorem we get that
\[
\int_{\T \setminus C}\prod_{k=k_{0}}^{N}\dfrac{1}{2}\left(1+\lambda ^{n_{k}}\right)d\mu (\lambda )\rightarrow 0 \quad \textrm{ as } {N\to+\infty }.
\]
It then follows from (\ref{eq1}) that 
\[
\Bigl|\limsup_{N\to+\infty }\int_{C}\prod_{k=k_{0}}^{N}\dfrac{1}{2}\left(1+\lambda ^{n_{k}}\right)d\mu (\lambda )-1\Bigr|\le\varepsilon 
\]
so that
\[
\limsup_{N\to+\infty }\Bigl|\int_{C}\prod_{k=k_{0}}^{N}\dfrac{1}{2}\left(1+\lambda ^{n_{k}}\right)d\mu (\lambda )\Bigr|\ge 1-\varepsilon .
\]
But $$\Bigl|\int_{C}\prod_{k=k_{0}}^{N}\frac{1}{2}(1+\lambda ^{n_{k}})\, d\mu (\lambda )\Bigr|\le \mu (C),$$ 
hence $\mu (C)\ge 1-\varepsilon $. This being true for any choice of $\varepsilon $ in $(0,1)$, $\mu (C)=1$, and so the product $\prod_{k\ge 1}\frac{1}{2}|1+\lambda ^{n_{k}}|$ converges to a non-zero limit almost everywhere with respect to the measure $\mu $.
If we now write elements $\lambda \in C$ as $\lambda =e^{2i\pi\theta }$, $\theta \in[0,1 )$, we have 
$$\prod_{k\ge  1}\frac{1}{2}|1+\lambda ^{n_{k}}|=\prod_{k\ge  1}|\cos(\pi\theta {n_{k}})|.$$ Since $0<|\cos(\pi\theta {n_{k}})|\le 1   $ for all $k\ge 1$, this means that the series $\sum_{k\ge 1}1-|\cos(\pi\theta n_{k})|$ is convergent.  In particular $\{\theta n_{k}\}\to 0$ as $k\to +\infty$. As the quantities $1-|\cos(\pi\theta n_{k})|$ and $\frac{\pi^{2}}{2}\{{\theta }n_{k}\}^{2}$ are equivalent as $k\to +\infty$, we obtain that the series  $\sum_{k\ge 1}\{{\theta }n_{k}\}^{2}$ is convergent.
But
\[
\bigl|1-\lambda ^{n_{k}}\bigr|^{2}=\bigl|1-e^{2i\pi\theta {n_{k}}}\bigr|^{2}\le 4\pi^{2} \{\theta {n_{k}}\}^{2},
\]
and it follows from this that the series $\sum_{k\ge 1}\bigl|1-\lambda ^{n_{k}}\bigr|^{2}$ is convergent as soon as $\lambda $ belongs to $C$. This proves our claim.

\section{IP-Dirichlet generalized Riesz products}

Our aim is now to give conditions on the \seq\ $(n_{k})_{k\ge 1}$ which imply the existence of a generalized Riesz product which is continuous and \ipd\ \wrt\ $(n_{k})_{k\ge 1}$. For information about classical and generalized Riesz products, we refer for instance the reader to the papers \cite{Pa} and \cite{HMP} and to the books \cite{HMP2} and \cite{Queff}.

\begin{proposition}\label{Prop3}
Let $(n_{k})_{k\ge 1}$ be a strictly increasing sequence of integers. Suppose that there exists a strictly increasing sequence $(m_{k})_{k\ge 1}$ of integers such that
\begin{equation}\label{Eq1}
 n_{k+1}-2\sum_{j=1}^{k}m_{j}n_{j}\ge 1\quad\textrm{for each}\ k\ge 1,
\end{equation}
and
\begin{equation}\label{Eq2}
 n_{k+1}-2\sum_{j=1}^{k}m_{j}n_{j}\longrightarrow+\infty \quad\textrm{as}\ k\longrightarrow+\infty. 
\end{equation}
For each $k\ge 1$, let $p_{k}\ge 1$ be an integer such that ${p_{k}\pi}\le{m_{k}+2}$. There exists a continuous generalized Riesz product $\sigma $ on $\T$ such that for every finite subset $F\in \mathcal{F}$ and every integers $j_{k}$ in $\{1,\dots,p_{k}\}$, $k\in F$, one has
\begin{align}
 \hat{\sigma }\Bigl(\sum_{k\in F}j_{k}n_{k}\bigr)&\ge\prod_{k\in F}\Bigl( 1-3\pi^{2}\Bigl( \dfrac{p_{k}}{m_{k}+2}\Bigr)^{2}\Bigr)\label{Eq3}
\intertext{and}
\hat{\sigma }\Bigl(\sum_{k\in F}n_{k}\bigr)&=\prod_{k\in F}\cos\Bigl( \dfrac{\pi}{m_{k}+2}\Bigr).\label{Eq4}
\end{align}
\end{proposition}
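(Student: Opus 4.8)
The plan is to realize $\sigma$ as a genuine generalized Riesz product (in the sense of \cite{Pa} and \cite[Section 4.2]{HMP}), namely the weak-$*$ limit $\sigma=\lim_{N}\prod_{k=1}^{N}|P_{k}(\lambda)|^{2}\,d\lambda$, where each $P_{k}$ is a trigonometric polynomial supported on the frequencies $\{0,n_{k},2n_{k},\dots,(m_{k}+1)n_{k}\}$ and $|P_{k}|^{2}\,d\lambda$ is a \proba\ \mea\ whose Fourier coefficient at $n_{k}$ equals \emph{exactly} $\cos(\pi/(m_{k}+2))$. The choice that forces this value is the normalized extremal polynomial
\[
P_{k}(\lambda)=\sqrt{\tfrac{2}{m_{k}+2}}\,\sum_{j=1}^{m_{k}+1}\sin\Bigl(\tfrac{\pi j}{m_{k}+2}\Bigr)\,\lambda^{jn_{k}},
\]
whose coefficient vector is the top eigenvector of the $(m_{k}+1)\times(m_{k}+1)$ tridiagonal matrix with $\tfrac12$ on the off-diagonals; the largest eigenvalue of that matrix is precisely $\cos(\pi/(m_{k}+2))$, and $\sum_{j}\sin^{2}(\pi j/(m_{k}+2))=(m_{k}+2)/2$ makes $\|P_{k}\|_{2}=1$.

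The first and crucial step is the single-block computation, which I expect to be the heart of the matter. Writing $b_{k}(i)=\widehat{|P_{k}|^{2}d\lambda}(in_{k})=\sum_{l}c_{l}c_{l-i}$ for the autocorrelation of the sine coefficients $c_{l}$, a direct summation (product-to-sum followed by a geometric series) yields the closed form
\[
b_{k}(j)=\frac{(m_{k}+1-j)\cos\bigl(\tfrac{\pi j}{m_{k}+2}\bigr)}{m_{k}+2}+\frac{\sin\bigl(\tfrac{\pi(j+1)}{m_{k}+2}\bigr)}{(m_{k}+2)\,\sin\bigl(\tfrac{\pi}{m_{k}+2}\bigr)},
\]
so that $b_{k}(1)=\cos(\pi/(m_{k}+2))$ exactly. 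Using the identity $\sin((j+1)\beta)/\sin\beta=\sum_{l=0}^{j}\cos((j-2l)\beta)$ with $\beta=\pi/(m_{k}+2)$, this rewrites as $b_{k}(j)=\cos(j\beta)+\tfrac{1}{m_{k}+2}\sum_{l=0}^{j}\bigl(\cos((j-2l)\beta)-\cos(j\beta)\bigr)$, and since $|j-2l|\le j$ with $j\beta\le p_{k}\beta\le 1<\pi/2$ (this is exactly where the hypothesis $\pi p_{k}\le m_{k}+2$ enters, keeping every relevant angle below $\pi/2$ so that all cosines stay positive), the correction is nonnegative. Hence $b_{k}(j)\ge\cos(\pi j/(m_{k}+2))\ge 1-3\pi^{2}(j/(m_{k}+2))^{2}$ for $1\le j\le p_{k}$, which is the per-factor estimate underlying (\ref{Eq3}).

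Next I would assemble the infinite product. Condition (\ref{Eq1}) reads $n_{k+1}>2\sum_{j\le k}m_{j}n_{j}$, which is exactly the dissociation hypothesis: every frequency $\sum_{k}i_{k}n_{k}$ with $|i_{k}|\le m_{k}$ has a unique such representation. It follows that all cross-frequencies in $\prod_{k\le N}|P_{k}|^{2}$ are nonzero, so $\int\prod_{k\le N}|P_{k}|^{2}\,d\lambda=\prod_{k\le N}b_{k}(0)=1$ and the partial products are \proba\ \mea s; moreover each Fourier coefficient stabilizes, with $\widehat{\prod_{k\le N}|P_{k}|^{2}d\lambda}\bigl(\sum_{k\in F}i_{k}n_{k}\bigr)=\prod_{k\in F}b_{k}(i_{k})$ as soon as $N\ge\max F$. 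By weak-$*$ compactness of the \proba\ \mea s on $\T$ together with convergence of every Fourier coefficient, the limit $\sigma$ exists and satisfies $\hat\sigma(\sum_{k\in F}i_{k}n_{k})=\prod_{k\in F}b_{k}(i_{k})$. Taking $i_{k}=1$ gives (\ref{Eq4}) exactly, while $i_{k}=j_{k}\in\{1,\dots,p_{k}\}$ together with the block estimate gives (\ref{Eq3}) (the bound being of interest precisely in the range where the right-hand factors are nonnegative).

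It remains to check that $\sigma$ is continuous, which I would do via Wiener's criterion: $\tfrac{1}{2N+1}\sum_{|n|\le N}|\hat\sigma(n)|^{2}\to 0$. By dissociation and the multiplicativity of $\hat\sigma$, this Cesàro average factorizes over the blocks, each block contributing a factor $\rho_{k}=\tfrac{1}{2m_{k}+1}\sum_{|i|\le m_{k}}b_{k}(i)^{2}$; since the kernels $|P_{k}|^{2}$ have comparable $L^{2}$ and $L^{4}$ masses, each $\rho_{k}$ is bounded away from $1$, and as there are infinitely many such factors the infinite product vanishes, so $\sigma$ has no atoms. Conditions (\ref{Eq1})--(\ref{Eq2}), with $m_{k}\nearrow+\infty$, guarantee that the dissociation is robust enough for all of the above to go through. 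The main obstacle is genuinely the extremal block analysis of the second paragraph---pinning down the polynomial whose $n_{k}$-coefficient is exactly $\cos(\pi/(m_{k}+2))$ and proving the clean autocorrelation lower bound $b_{k}(j)\ge\cos(\pi j/(m_{k}+2))$; once the blocks are understood, the Riesz-product machinery (dissociation $\Rightarrow$ multiplicativity, weak-$*$ convergence, and Wiener continuity) is standard.
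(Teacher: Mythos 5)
Your construction coincides with the paper's: the measure is the generalized Riesz product built from the blocks $P_{k}(e^{2i\pi t})=\frac{2}{m_{k}+2}\bigl|\sum_{j=1}^{m_{k}+1}\sin\bigl(\frac{j\pi}{m_{k}+2}\bigr)e^{2i\pi jt}\bigr|^{2}$, condition (\ref{Eq1}) is used exactly as you use it, as a dissociation condition giving multiplicativity of the Fourier coefficients and weak-$*$ convergence, and your closed form for $b_{k}(j)$ is the paper's expression for $\wh{P_{k}}(p)$ (expand $\sin((j+1)\beta)=\sin(j\beta)\cos\beta+\cos(j\beta)\sin\beta$ to see the two formulas agree). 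One genuine improvement: your bound $b_{k}(j)\ge\cos\bigl(\frac{j\pi}{m_{k}+2}\bigr)$, obtained from $\sin((j+1)\beta)/\sin\beta=\sum_{l=0}^{j}\cos((j-2l)\beta)$ and the monotonicity of the cosine, is cleaner and slightly sharper than the paper's term-by-term Taylor estimates, and yields (\ref{Eq3}) at once. The one loose step is continuity. The Cesàro average $\frac{1}{2N+1}\sum_{|n|\le N}|\hat\sigma(n)|^{2}$ does \emph{not} literally factorize as $\prod_{k}\rho_{k}$ with $\rho_{k}=\frac{1}{2m_{k}+1}\sum_{|i|\le m_{k}}b_{k}(i)^{2}$: taking $N=N_{K}=\sum_{k\le K}m_{k}n_{k}$, the numerator is $\prod_{k\le K}\|P_{k}\|_{L^{2}}^{2}$ but the denominator is $2N_{K}+1$, not $\prod_{k\le K}(2m_{k}+1)$; you only get an upper bound after checking $2N_{K}+1\ge\prod_{k\le K}2m_{k}$ (which does follow inductively from (\ref{Eq1})), and the assertion that each factor is bounded away from $1$ (``comparable $L^{2}$ and $L^{4}$ masses'') is true for these kernels but requires an actual estimate of $\sum_{|i|\le m_{k}}b_{k}(i)^{2}$. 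All of this can be repaired, but note that your argument never invokes (\ref{Eq2}), which is exactly the hypothesis the paper uses for continuity: $\hat\sigma$ vanishes on the intervals $\bigl(\sum_{j\le k}m_{j}n_{j},\,n_{k+1}-\sum_{j\le k}m_{j}n_{j}\bigr)$, whose lengths tend to infinity by (\ref{Eq2}), and the uniform form of Wiener's theorem then gives continuity with no computation. I would either switch to that gap argument or write out the Cesàro estimate in full.
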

\begin{proof}
 For any integer $k\ge 1$, consider the polynomial $P_{k}$ defined by 
\[
P_{k}(e^{2i\pi t})=\dfrac{2}{m_{k}+2}\Bigl|\sum_{j=1}^{m_{k}+1}\sin\Bigl( \dfrac{j\pi }{m_{k}+2}\Bigr)e^{2i\pi jt}\Bigr|^{2},\quad t\in[0,1].
\]
Each $P_{k}$ is a nonnegative trigonometric polynomial. Its spectrum is the set 
$\{-m_{k},\dots, m_{k}\}$ and a straightforward computation shows that $\wh{P_{k}}(0)=1$. Condition (\ref{Eq1}), which is a dissociation condition, implies that the probability measures 
$\prod_{k=1}^{N} P_{k}\bigl( e^{2i\pi n_{k}t}\bigr)d\lambda (t)$ (where $\lambda $ denotes here the normalized Lebesgue \mea\ on $\T$) converge in the $w^{*}$ topology as $N\to+\infty$ to a probability measure $\sigma $ on $\T$, and that for each  $F\in \mathcal{F}$  and each integers $j_{k}\in\{-m_{k},\dots,m_{k}\}$, $k\in F$,
\[
\wh{\sigma }\Bigl( \sum_{k\in F}j_{k}n_{k}\Bigr)=\prod_{k\in F}\wh{P_{k}}(j_{k}),
\]
while $\wh{\sigma }(n)=0$ when $n$ is not of this form. In particular
\[
\wh{\sigma }\Bigl( \sum_{k\in F}n_{k}\Bigr)=\prod_{k\in F}\wh{P_{k}}(1).
\]
Before getting into precise computation of these Fourier coefficients, let us prove that $\sigma $ is a continuous measure: this follows from condition (\ref{Eq2}). If
\[
\sum_{j=1}^{k}m_{j}n_{j}<n<n_{k+1}-\sum_{j=1}^{k}m_{j}n_{j},
\]
then $\wh\sigma (n)=0$. So the Fourier transform of $\sigma $ vanishes  on successive intervals $I_{k}$ of length $l_{k}=n_{k+1}-2\sum_{j=1}^{k}m_{j}n_{j}-1$. Since $l_{k}$ tends to infinity with $k$ by (\ref{Eq2}), it follows from the Wiener theorem that $\sigma $ is continuous.
\par\medskip 
Let us now go back to the computation of the Fourier coefficients $\wh{\sigma }\Bigl( \sum_{k\in F}j_{k}n_{k}\Bigr)$. For each $p\in\{1,\dots,m_{k}\}$, we have
\begin{equation}\label{ee1}
\wh{P_{k}}(p)=\dfrac{2}{m_{k}+2}\sum_{j=1}^{m_{k}+1-p}\sin\Bigl( \dfrac{(j+p)\pi }{m_{k}+2}\Bigr)\sin\Bigl( \dfrac{j\pi }{m_{k}+2}\Bigr).
\end{equation}
Standard computations yield the following expression for $\wh{P_{k}}(p)$:
\begin{eqnarray}\label{ee2}
\wh{P_{k}}(p)&=& \dfrac{1}{m_{k}+2}
 \Bigl((m_{k}+2-p)\cos\Bigl(\dfrac{p\pi }{m_{k}+2}\Bigr)+\sin\Bigl(\dfrac{p\pi }{m_{k}+2}\Bigr)\,\cdot\,\dfrac{\cos\bigl(\frac{\pi }{m_{k}+2}\bigr)}{\sin\bigl(\frac{\pi }{m_{k}+2}\bigr)}\Bigr)\\
&=&\dfrac{1}{m_{k}+2}\Bigl((m_{k}+2-p)\cos\Bigl( \dfrac{p\pi }{m_{k}+2}\Bigr)+\cos\Bigl( \dfrac{(p-1)\pi }{m_{k}+2}\Bigr)\,\cdot\,\cos\Bigl( \dfrac{\pi }{m_{k}+2}\Bigr)\notag\\
&&+\sin\Bigl( \dfrac{(p-1)\pi }{m_{k}+2}\bigr)\,\cdot\,\dfrac{\cos^{2}\bigl( \frac{\pi }{m_{k}+2}\bigr)}{\sin\bigl( \frac{\pi }{m_{k}+2}\bigr)}\notag\\
&=&\dots =\dfrac{1}{m_{k}+2}\Bigl( (m_{k}+2-p)\cos\Bigl( \dfrac{p\pi }{m_{k}+2}\Bigr)\notag\\
&&\qquad+\sum_{j=1}^{p}\cos\Bigl( \dfrac{(p-j)\pi }{m_{k}+2}\Bigr)\cos^{j}\Bigl( \dfrac{\pi }{m_{k}+2}\Bigr)\Bigr)\notag.
\end{eqnarray}
Observe now  that for every $x\in[0,1]$, $\cos x\ge 1-x^{2}\ge 0$. For each $k\ge 1$, let $p_{k}\ge 1$ be an integer such that $p_{k}\pi\le m_{k}+2$, and let $p$ belong to the set $\{1,\dots,p_{k}\}$. Then $(p-j)\pi \le m_{k}+2$ for every $j\in\{0,\dots,p-1\}$. Thus
\[
\cos\Bigl(\dfrac{p\pi }{m_{k}+2}\Bigr)\ge 1-\pi ^{2}\dfrac{p^{2}}{(m_{k}+2)^{2}}\quad\textrm{and}\quad
\cos\Bigl(\dfrac{(p-j)\pi }{m_{k}+2}\Bigr)\ge 1-\pi ^{2}\dfrac{(p-j)^{2}}{(m_{k}+2)^{2}}\cdot 
\]
Moreover, $\cos^{j}x\ge (1-x^{2})^{j}\ge 1-jx^{2}$ for every $x\in[0,1]$ and every $j\ge 1$, so that 
\[
\cos^{j}\Bigl( \dfrac{\pi }{m_{k}+2}\Bigr)\ge 1-\pi ^{2}\dfrac{j}{(m_{k}+2)^{2}}\cdot
\]
Putting things together, we obtain the estimate
\begin{align*}
\wh{P_{k}}(p)&\ge\dfrac{1}{m_{k}+2}\Bigl((m_{k}+2-p)\Bigl(1-\pi ^{2}\dfrac{p^{2}}{(m_{k}+2)^{2}}\Bigr)\\
&\hspace*{3cm}+\sum_{j=1}^{p}\Bigl(1-\pi ^{2}\dfrac{(p-j)^{2}}{(m_{k}+2)^{2}}\Bigr)
\Bigl(1-\pi ^{2}\dfrac{j}{(m_{k}+2)^{2}}\Bigr)\Bigr)\cdot
\end{align*}
Now, for every $j\in\{1,\dots,p-1\}$,
\begin{align*}
 \Bigl(1-\pi^{2}\,\dfrac{(p-j)^{2}}{(m_{k}+2)^{2}}\Bigr)
\Bigl(1-\pi^{2}\dfrac{j}{(m_{k}+2)^{2}}\Bigr)&=1-\pi^{2}\dfrac{(p-j)^{2}+j}{(m_{k}+2)^{2}}+\pi^{4}\,\dfrac{j(p-j)^{2}}{(m_{k}+2)^{2}}\\
&\ge 1-\pi^{2}\dfrac{(p-j)^{2}+j}{(m_{k}+2)^{2}}\ge 1-2\pi^{2}\dfrac{p^{2}}{(m_{k}+2)^{2}}\cdot
\end{align*}
Summing over $j$ and putting together terms, we eventually obtain that
\begin{align*}
 \wh{P_{k}}(p)&\ge \dfrac{1}{m_{k}+2}\Bigl( ({m_{k}+2}-p)\Bigl(1-\pi^{2}  \dfrac{p^{2}}{(m_{k}+2)^{2}}\Bigr)+p-2\pi^{2} \dfrac{p^{3}}{(m_{k}+2)^{2}} \Bigr)\\
 &\ge 1-\dfrac{1}{m_{k}+2}({m_{k}+2}-p)\pi^{2}\Bigl( \dfrac{p}{m_{k}+2}\Bigr)^{2}-2\pi^{2} \Bigl( \dfrac{p}{m_{k}+2}\Bigr)^{3},
\end{align*}
i.e. that
\begin{align*}
 \wh{P_{k}}(p)&\ge 1-\pi^{2}\Bigl( \dfrac{p}{m_{k}+2}\Bigr)^{2}-2\pi^{2}\Bigl(  \dfrac{p}{m_{k}+2}\Bigr)^{3}\\
&\ge 1-\pi^{2}\Bigl( \dfrac{p_{k}}{m_{k}+2}\Bigr)^{2}-2\pi^{2}\Bigl(  \dfrac{p_{k}}{m_{k}+2}\Bigr)^{3}
\quad \textrm{ for each } p\in\{1,\ldots, p_{k}\}\\
&\ge 1-3\pi^{2}\Bigl( \dfrac{p_{k}}{m_{k}+2}\Bigr)^{2}.
\end{align*}
Assertion (\ref{Eq3}) follows directly from the fact that $\wh{\sigma }\bigl( \sum_{k\in F}j_{k}n_{k}\bigr)=\prod_{k\in F}\wh{P_{k}}(j_{k})$. Assertion (\ref{Eq4}) is straightforward:
the expression in the first line of the display (\ref{ee2}) applied to $p=1$ yields that $\wh{P_{k}}(1)=\cos(\pi /(m_{k}+2))$. This finishes the proof of Proposition \ref{Prop3}.
\end{proof}

Proposition \ref{Prop3} may appear a bit technical at first sight, but it turns out to be quite easy to apply. As a first example, we use it to obtain another proof of a result of \cite[Prop. 3.2]{AHL}:

\begin{corollary}\label{Cor4}
 Let $(n_{k})_{k\ge 1}$ be a strictly increasing sequence of integers such that the series 
$\sum_{k\ge 1} (n_{k}/n_{k+1})^{2}$ is convergent. There exists a continuous generalized Riesz product $\sigma $ on $\T$ which is \mbox{IP-Dirichlet} with respect to $(n_{k})_{k\ge 1}$. 
\end{corollary}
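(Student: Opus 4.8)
The plan is to deduce the corollary from Proposition \ref{Prop3} by feeding it a near-optimal auxiliary sequence $(m_k)_{k\ge 1}$. Since the IP-Dirichlet property only concerns sets $F$ with $\min(F)\to+\infty$, it is a tail condition, so it depends only on the tail of $(n_k)_{k\ge 1}$; and the hypothesis $\sum_{k\ge 1}(n_k/n_{k+1})^2<+\infty$ forces $n_k/n_{k+1}\to 0$, i.e. $n_{k+1}/n_k\to+\infty$. Hence, after discarding finitely many initial terms (which changes neither the hypothesis nor the conclusion), I may assume $n_{k+1}/n_k\ge 16$ for every $k\ge 1$. I then apply Proposition \ref{Prop3} with
\[
m_k=\Bigl[\tfrac{1}{8}\,\tfrac{n_{k+1}}{n_k}\Bigr]\qquad\text{and}\qquad p_k=1 .
\]
Here $m_k\ge 2$ because $n_{k+1}/n_k\ge 16$, and the choice $p_k=1$ is permitted since then $p_k\pi=\pi\le 4\le m_k+2$.

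First I would verify the two dissociation conditions (\ref{Eq1}) and (\ref{Eq2}). A geometric estimate using $n_j/n_{j+1}\le 1/16$ gives $\sum_{j=2}^{k+1}n_j\le 2\,n_{k+1}$, and since $m_jn_j\le\frac18 n_{j+1}$ this yields $2\sum_{j=1}^{k}m_jn_j\le\frac14\sum_{j=2}^{k+1}n_j\le\frac12 n_{k+1}$, so that
\[
n_{k+1}-2\sum_{j=1}^{k}m_jn_j\ \ge\ \tfrac12\,n_{k+1},
\]
which is $\ge 1$ and tends to $+\infty$: both (\ref{Eq1}) and (\ref{Eq2}) hold, and Proposition \ref{Prop3} provides a continuous generalized Riesz product $\sigma$. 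To read off IP-Dirichletness I use (\ref{Eq4}): for $F\in\mathcal{F}$ with $\min(F)\ge k_0$, all factors lying in $(0,1]$,
\[
1\ \ge\ \hat\sigma\Bigl(\sum_{k\in F}n_k\Bigr)=\prod_{k\in F}\cos\Bigl(\tfrac{\pi}{m_k+2}\Bigr)\ \ge\ \prod_{k\ge k_0}\cos\Bigl(\tfrac{\pi}{m_k+2}\Bigr)\ \ge\ 1-\sum_{k\ge k_0}\Bigl(1-\cos\tfrac{\pi}{m_k+2}\Bigr).
\]
Since $1-\cos x\le x^2/2$ and $m_k\ge\frac{1}{16}\,n_{k+1}/n_k$ (because $[x]\ge x/2$ for $x\ge 1$), the series $\sum_k\bigl(1-\cos(\pi/(m_k+2))\bigr)\le\frac{\pi^2}{2}\sum_k m_k^{-2}\le 128\pi^2\sum_k(n_k/n_{k+1})^2$ converges, so its tail tends to $0$. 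Hence $\hat\sigma(\sum_{k\in F}n_k)\to 1$ as $\min(F)\to+\infty$, which is exactly IP-Dirichletness.

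The one real decision is the choice of $(m_k)$, and it is governed by a genuine tension: (\ref{Eq1}) forces $2m_kn_k<n_{k+1}$, i.e. $m_k\ls n_{k+1}/n_k$, whereas pushing the cosines in (\ref{Eq4}) up to $1$ requires $m_k$ large enough that $\sum_k m_k^{-2}<+\infty$. These two demands are simultaneously met precisely when $\sum_k(n_k/n_{k+1})^2<+\infty$, which is the hypothesis; the constant $\frac18$ is merely slack absorbing the lower-order terms of the dissociation sum. A small point to address is that $m_k=[\frac18 n_{k+1}/n_k]$ need not be monotone, whereas Proposition \ref{Prop3} is phrased for a strictly increasing $(m_k)$; but its proof uses $(m_k)$ only through the dissociation inequalities (\ref{Eq1})--(\ref{Eq2}) (for the $w^\ast$-convergence, for the factorization $\hat\sigma(\sum_{k\in F}j_kn_k)=\prod_{k\in F}\hat P_k(j_k)$, and, via (\ref{Eq2}) and Wiener's theorem, for continuity), so monotonicity is immaterial and the choice above is legitimate.
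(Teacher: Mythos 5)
Your proof is correct and follows essentially the same route as the paper: both apply Proposition \ref{Prop3} with $p_{k}=1$ and $m_{k}$ of order $n_{k+1}/n_{k}$, then conclude from (\ref{Eq4}) together with the convergence of $\sum_{k}(m_{k}+2)^{-2}$. The only difference is bookkeeping for the dissociation conditions --- you discard an initial segment to force geometric growth and sum a geometric series, while the paper telescopes via an auxiliary sequence $(\varepsilon_{k})$ --- and your explicit remark that Proposition \ref{Prop3} never uses monotonicity of $(m_{k})$ correctly disposes of the one formal mismatch.
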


\begin{proof}
 Without loss of generality we can assume that $\sum_{k\ge 1} (n_{k}/n_{k+1})^{2}<1/36$. Let 
$(\varepsilon _{k})_{k\ge 1}$ be a sequence of real numbers with $0<\varepsilon _{k}<1/2$ for each $k\ge 2$, with $\varepsilon _{1}=0$, going to zero as $k$ tends to infinity, and such that 
\[
\sum_{k\ge 1}\,\Bigl( \dfrac{1}{\varepsilon _{k+1}}\dfrac{n_{k}}{n_{k+1}}\Bigr)^{2}<\dfrac{1}{9}\cdot
\]
Then $\varepsilon _{k+1} n_{k+1}/n_{k}>3>2+\varepsilon _{k}$, so that if we define $m_{k}=[(\varepsilon _{k+1}n_{k+1}-\varepsilon _{k}n_{k})/2n_{k}]$ for each $k\ge 1$, each $m_{k}$ is a positive integer. Moreover
\[n_{k+1}-2\sum_{j=1}^{k}m_{j}n_{j}\ge n_{k+1}-(\varepsilon _{k+1}n_{k+1}-\varepsilon _{1}n_{1})=(1-\varepsilon _{k+1})n_{k+1}
\]
which tends to infinity as $k$ tends to infinity, and is always greater than $1$ because $\varepsilon _{k+1}<1/2$ and $n_{k+1}\ge 2$ for each $k\ge 1$. Proposition \ref{Prop3} applies with this choice of the sequence $(m_{k})_{k\ge 1}$ and yields a continuous generalized Riesz product $\sigma $ which satisfies
\[
\wh{\sigma }\Bigl( \sum_{k\in F}n_{k}\Bigr)=\prod_{k\in F}\cos\Bigl(\dfrac{\pi }{m_{k}+2} \Bigr)\quad\textrm{for each } F \in \mathcal{F}.
\]
Now $m_{k}$ is equivalent as $k$ tends to infinity to the quantity $\varepsilon _{k+1} n_{k+1}/2n_{k}$, so that the series $\sum_{k\ge 1}1/(m_{k}+2)^{2}$ is convergent. Hence the infinite product $\prod_{k\ge 1}\cos\bigl( \pi /(m_{k}+2)\bigr)$ is convergent. For any $\varepsilon >0$, let $k_{0}$ be such that 
$\prod_{k\ge k_{0}}\cos\bigl( \pi /(m_{k}+2)\bigr)\ge 1-\varepsilon $. If $F\in \mathcal{F}$ is such that $\min (F)\ge k_{0}$,
\[
\wh{\sigma }\Bigl( \sum_{k\in F}n_{k}\Bigr)=\prod_{k\in F}\cos\Bigl( \dfrac{\pi }{m_{k}+2}\Bigr)\ge\prod_{k\ge k_{0}}\cos\Bigl( \dfrac{\pi }{m_{k}+2}\Bigr)\ge 1-\varepsilon,
\]
and this proves that $\sigma $ is \mbox{IP-Dirichlet} with respect to $(n_{k})_{k\ge 1}$. 
\end{proof}

\section{An application to a special class of sets $\{n_{k}\}$}

Proposition \ref{Prop3} applies especially well to a particular class of sequences $(n_{k})_{k\ge 1}$, which we now proceed to investigate.

\begin{proposition}\label{Prop5}
 Let $(p_{l})_{l\ge 1}$ be a strictly increasing sequence of integers. For each $l\ge 1$, let $(q_{j,l})_{j=0,\dots,r_{l}}$ be a strictly increasing finite sequence of integers with $q_{0,l}=1$, and set 
$q_{l}=q_{0,l}+q_{1,l}+\cdots+q_{r_{l}, l}$. Suppose that $p_{l+1}>q_{r_{l}, l}\,p_{l}$ for each $l\ge 1$, and that the series $$\sum_{l\ge 1}\left(\dfrac{q_{l}\, p_{l}}{p_{l+1}}\right)^{2}$$ is convergent. Let $(n_{k})_{k\ge 1}$ be the strictly increasing sequence defined by 
\[
\{n_{k}\}=\bigcup_{l\ge 1}\{p_{l},q_{1, l}\, p_{l},\dots,q_{r_{l}, l}\, p_{l}\}
\]
There exists a continuous generalized Riesz product $\sigma $ on $\T$ which is \mbox{IP-Dirichlet} with respect to the \seq\ $(n_{k})_{k\ge 1 }$. 
\end{proposition}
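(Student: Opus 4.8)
The plan is to apply Proposition \ref{Prop3} not to $(n_{k})_{k\ge 1}$ itself, but to the sparse \seq\ $(p_{l})_{l\ge 1}$, exploiting the block structure of $\{n_{k}\}$. The key observation is that every IP-sum decomposes along the blocks: given $F\in\mathcal{F}$, let $F_{l}$ collect the indices of $F$ lying in the $l$-th block $\{p_{l},q_{1,l}p_{l},\dots,q_{r_{l},l}p_{l}\}$. Then
\[
\sum_{k\in F}n_{k}=\sum_{l}c_{l}\,p_{l},\qquad c_{l}=\sum_{j\in F_{l}}q_{j,l}\in\{0,1,\dots,q_{l}\},
\]
since $q_{l}=q_{0,l}+\cdots+q_{r_{l},l}$ is exactly the largest coefficient of $p_{l}$ that can occur. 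I would therefore run Proposition \ref{Prop3} on $(p_{l})_{l\ge 1}$ with the admissible exponents $P_{l}:=q_{l}$ (in the role of $p_{k}$ there), so that each relevant $c_{l}$ lies in $\{1,\dots,P_{l}\}$ and the lower bound (\ref{Eq3}) applies. Note that the admissibility $q_{l}\pi\le m_{l}+2$ also forces $q_{l}<m_{l}$, so the exponents $c_{l}$ stay inside the spectrum $\{-m_{l},\dots,m_{l}\}$ of $P_{l}$.

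Next I would choose the auxiliary \seq\ $(m_{l})_{l\ge 1}$, whose natural scale is $m_{l}\asymp p_{l+1}/p_{l}$, exactly as in Corollary \ref{Cor4}. Mimicking that proof, I would set $m_{l}=[(\varepsilon_{l+1}p_{l+1}-\varepsilon_{l}p_{l})/2p_{l}]$ for a gauge \seq\ $0=\varepsilon_{1}<\varepsilon_{2}<\cdots\to 0$; telescoping then yields the dissociation estimate $p_{l+1}-2\sum_{j=1}^{l}m_{j}p_{j}\ge(1-\varepsilon_{l+1})p_{l+1}$, hence conditions (\ref{Eq1}) and (\ref{Eq2}). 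The genuinely new constraint, absent from Corollary \ref{Cor4} because there $P_{l}=1$, is the admissibility requirement $P_{l}\pi=q_{l}\pi\le m_{l}+2$ of Proposition \ref{Prop3}; since $m_{l}+2$ is comparable to $\varepsilon_{l+1}p_{l+1}/2p_{l}$, this imposes the extra lower bound $\varepsilon_{l+1}\ge 2\pi\,q_{l}p_{l}/p_{l+1}$. One must simultaneously keep $\sum_{l}(q_{l}/(m_{l}+2))^{2}\asymp\sum_{l}(\tfrac{1}{\varepsilon_{l+1}}\tfrac{q_{l}p_{l}}{p_{l+1}})^{2}$ finite, so that the relevant infinite products converge.

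The heart of the matter is thus to produce a gauge $(\varepsilon_{l})$ tending to $0$ that reconciles these two demands, i.e.\ with $\varepsilon_{l+1}\ge 2\pi\,q_{l}p_{l}/p_{l+1}$ and $\sum_{l}(\tfrac{1}{\varepsilon_{l+1}}\tfrac{q_{l}p_{l}}{p_{l+1}})^{2}<+\infty$. Writing $a_{l}=q_{l}p_{l}/p_{l+1}$, the hypothesis is precisely $\sum_{l}a_{l}^{2}<+\infty$, and I expect the choice $\varepsilon_{l+1}=2\pi a_{l}+R_{l}^{1/4}$ with $R_{l}=\sum_{j\ge l}a_{j}^{2}$ to do the job: indeed $R_{l}\downarrow 0$ gives $\varepsilon_{l+1}\to 0$, the first summand gives $\varepsilon_{l+1}\ge 2\pi a_{l}$, and an Abel--Dini type estimate $a_{l}^{2}/\sqrt{R_{l}}=(R_{l}-R_{l+1})/\sqrt{R_{l}}\le 2(\sqrt{R_{l}}-\sqrt{R_{l+1}})$ sums to $2\sqrt{R_{1}}$, so that $\sum_{l}(a_{l}/\varepsilon_{l+1})^{2}\le\sum_{l}a_{l}^{2}/\sqrt{R_{l}}<+\infty$. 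Only finitely many initial terms can violate $\varepsilon_{l+1}<1/2$ (since $a_{l}\to 0$), and these are adjusted by hand without disturbing the dissociation in the limit. This gauge construction is where I expect the only real difficulty to lie.

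With these choices in place the conclusion follows. Proposition \ref{Prop3} produces a continuous generalized Riesz product $\sigma$, continuity coming from (\ref{Eq2}); and for $F\in\mathcal{F}$, writing $L=\min\{l:F_{l}\ne\emptyset\}$ and applying (\ref{Eq3}) with the exponents $c_{l}\le q_{l}=P_{l}$,
\[
\hat{\sigma}\Bigl(\sum_{k\in F}n_{k}\Bigr)=\prod_{l\ge L,\,F_{l}\ne\emptyset}\wh{P_{l}}(c_{l})\ge\prod_{l\ge L}\Bigl(1-3\pi^{2}\bigl(\tfrac{q_{l}}{m_{l}+2}\bigr)^{2}\Bigr).
\]
Because the blocks are ordered ($p_{l+1}>q_{r_{l},l}p_{l}$ places all of block $l$ below all of block $l+1$), a large value of $\min(F)$ forces a large $L$; and since $\sum_{l}(q_{l}/(m_{l}+2))^{2}<+\infty$, the tail product tends to $1$. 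Hence $\sigma$ is \ipd\ \wrt\ $(n_{k})_{k\ge 1}$. The \emph{main obstacle}, as indicated, is the reconciliation of the three competing demands on $m_{l}$ — small for dissociation, of size at least $\pi q_{l}$ for admissibility, large for summability — at the single available scale $p_{l+1}/p_{l}$, which is exactly what the square-summability hypothesis makes possible.
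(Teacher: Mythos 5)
Your proposal is correct and takes essentially the same route as the paper: both apply Proposition \ref{Prop3} to the sparse sequence $(p_{l})_{l\ge 1}$ with the block coefficients $c_{l}\le q_{l}$ playing the role of the $j_{k}$, and both set $m_{l}=[(\varepsilon _{l+1}p_{l+1}-\varepsilon _{l}p_{l})/(2p_{l})]$ for a gauge $(\varepsilon _{l})$ tending to zero that makes $\sum_{l}\bigl(\frac{1}{\varepsilon _{l+1}}\frac{q_{l}p_{l}}{p_{l+1}}\bigr)^{2}$ finite while keeping $q_{l}\pi\le m_{l}+2$. The only difference is that you build the gauge explicitly via an Abel--Dini estimate, whereas the paper simply asserts its existence after normalizing the hypothesis to $\sum_{l}(q_{l}p_{l}/p_{l+1})^{2}<1/324$.
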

\begin{proof}
 As in the proof of Corollary \ref{Cor4}, we can suppose that $\sum_{k\ge 1}(q_{l}p_{l}/p_{l+1})^{2}<1/324$, and consider a sequence $(\varepsilon _{l})_{l\ge 1}$ going to zero as $l$ tends to infinity with $\varepsilon _{1}=0$ and $0<\varepsilon _{l}<1/2$ for each $l\ge 2$, such that
\[
\sum_{l\ge 1}\Bigl( \dfrac{1}{\varepsilon _{l+1}} \dfrac{q_{l}p_{l}}{p_{l+1}}\Bigr)^{2}<\frac{1}{81}\cdot
\]
The same argument as in the proof of Corollary \ref{Cor4} shows that 
  for $l\ge 1$ the integers $m_{l}=[ ({\varepsilon _{l+1}p_{l+1}-\varepsilon _{l}p_{l}})/({2p_{l}})]$ are positive, and that assumptions (\ref{Eq1}) and (\ref{Eq2}) of Proposition \ref{Prop3} are satisfied. As $m_{l}$ is equivalent as $l$ tends to infinity to $({\varepsilon _{l+1}}{p_{l+1}})/(2{p_{l}})$,
${q_{l}}/({m_{l}+2})$ is equivalent to $  (2q_{l}p_{l})/(\varepsilon _{l+1}{p_{l+1}})$. 
Our assumption implies then that the series 
\begin{equation}\label{eq8bis}
 \sum_{l\ge 1}\Bigl(\frac{q_{l}}{m_{l}+2}\Bigr)^{2}
\end{equation}
 is convergent, and that ${q_{l}\pi }\le{m_{l}+2}$ for each $l\ge 1$. Applying Proposition \ref{Prop3} to the sequence $(p_{l})_{l\ge 1}$, we get a continuous generalized Riesz product $\sigma  $, and the estimates (\ref{Eq3}) yield that
\[
\wh{\sigma }\Bigl( \sum_{l\in F}\Bigl( \sum_{j\in G_{l}}q_{j,\, l}\Bigr)p_{l}\Bigr)\ge\prod_{l\in F}\Bigl( 1-3\pi^{2}\Bigl( \dfrac{q_{l}}{m_{l}+2}\Bigr)^{2}\Bigr)
\]
for each set $F\in \mathcal{F}$ and each subsets $G_{l}$ of $\{0,\dots,r_{l}\}$, $l\in F$. In order to show that the measure $\sigma $ is \mbox{IP-Dirichlet} with respect to $(n_{k})_{k\ge 1}$, it remains to observe that the product on the right-hand side is convergent
by (\ref{eq8bis}). We then conclude as in the proof of Corollary \ref{Cor4}. 
\end{proof}

The proof of Theorem \ref{Cor6} is now a straightforward corollary of Proposition \ref{Prop5}. Recall that we wish to prove that if $(n_{k})_{k\ge 1}$ is a sequence of integers for which there exists an infinite subset $S$ of $\N$ such that
\[
\sum_{k\in S}\, \, \Bigl( \dfrac{n_{k}}{n_{k+1}}\Bigr)^{2}<+\infty \quad\textrm{and}\ n_{k}\vert n_{k+1}\ \textrm{for each}\ k\not\in S,
\]
then there exists a continuous generalized Riesz product $\sigma $ on $\T$ which is \ipd\ with respect to $(n_{k})_{k\ge 1}$.

\begin{proof}[Proof of Theorem \ref{Cor6}]
 Let $\Phi :\N\rightarrow\N$ be a strictly increasing function such that $S=\{\Phi (l),\ l\ge 1\}$. Set $p_{l}=n_{\Phi (l)+1}$ for $l\ge 1$ and write for each $k\in \{\Phi (l)+1,\dots,\Phi (l+1)\}$
\[
n_{k}=s_{0,l}\, s_{1,l}\, \dots s_{k-(\Phi (l)+1),l}\, p_{l},
\]
with $s_{0,l}=1$ and $s_{j,l}\ge 2$ for each $j=1,\ldots, \Phi(l+1)-(\Phi(l)+1)$.
With the notation of Proposition \ref{Prop5} we have $r_{l}=\Phi (l+1)-(\Phi (l)+1)$ and
\[
q_{k-(\Phi (l)+1),l}=s_{0, l}\, s_{1,l}\, \dots s_{k-(\Phi (l)+1), l}
\]
Hence $q_{l}=q_{0,l}+\cdots+q_{r_{l}, l}=s_{0, l}+s_{0,l}\, s_{1,l}+\cdots+s_{0,l}\, s_{1, l}\dots s_{r_{l},l}$. We have
\begin{align*}
 \dfrac{q_{l}}{s_{0, l}\, s_{1, l}\dots s_{r_{l},l}}&=1+\dfrac{1}{s_{r_{l},l}}+\dfrac{1}{s_{r_{l}-1,l}\, s_{r_{l},l}}+\dots+\dfrac{1}{ s_{2,l}\dots s_{r_{l},l}}+\dfrac{1}{s_{1,l}\dots s_{r_{l},l}}\\
&\le 1+\dfrac{1}{2}+\dfrac{1}{4}+\cdots+\dfrac{1}{2^{r_{l}}}\quad\textrm{since}\ s_{j,l}\ge 2\ \textrm{for each}\ j=1,\ldots, r_{l}\\
&\le 2.
\end{align*}
This yields that $q_{l}\le 2 s_{0, l}s_{1,l}\dots s_{r_{l}, l}=2q_{r_{l},l}$ for each $l\ge 1$. Our assumption that the series $\sum_{k\in S}(n_{k}/n_{k+1})^{2}$ is convergent means that the series $\sum_{l\ge 1}(q_{r_{l}, l}\, p_{l}/p_{l+1})^{2}$ is convergent. Hence the series $\sum_{l\ge 1}(q_{l}\, p_{l}/p_{l+1})^{2}$ is convergent and the conclusion follows from Proposition \ref{Prop5}.
\end{proof}

Our next result shows the optimality of the assumption of Proposition \ref{Prop5} that the series $\sum_{l\ge 1}(q_{l}p_{l}/p_{l+1})^{2}$ is convergent.

\begin{proposition}\label{Prop7}
 Let $(\gamma _{l})_{l\ge 1}$ be any sequence of positive real numbers, going to zero as $l$ goes to infinity, such that the series $\sum_{l\ge 1}\gamma _{l}^{2}$ is divergent, with $0<\gamma _{l}<1$ for each $l\ge 2$. Let $(r_{l})_{l\ge 1}$ be a sequence of integers growing to infinity so slowly that the series $\sum_{l\ge 1}\gamma _{l}^{2}/r_{l}$ is divergent, with $r_{l}\ge 2$ for each $l\ge 1$. Define a sequence $(p_{l})_{l\ge 1}$ of integers by setting $p_{1}=1$ and $p_{l+1}=[r^{2}_{l}/\gamma _{l}]\, p_{l}+1$. For each $l\ge 1$, we have $p_{l+1}>r_{l}\, p_{l}$. Define a strictly increasing sequence $(n_{k})_{k\ge 1}$ of integers by setting 
\[
\{n_{k}\}=\bigcup_{l\ge 1}\{p_{l},2\, p_{l},\dots,r_{l}\, p_{l}\}.
\]
Then no continuous measure $\sigma $ on the unit circle can be \mbox{IP-Dirichlet} with respect to the sequence $(n_{k})_{k\ge 1}$. 
\end{proposition}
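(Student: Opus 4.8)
The plan is to argue by contradiction: I assume some continuous probability measure $\sigma$ on $\T$ is \ipd\ with respect to $(n_k)_{k\ge 1}$, and I try to extract from this a quantitative concentration of $\sigma$ that continuity cannot sustain once $\sum_l\gamma_l^2/r_l=+\infty$. The first, safe, reduction exploits that within the $l$-th block the admissible integers are $p_l,2p_l,\dots,r_lp_l$, and that \emph{every} integer $s\in\{1,\dots,T_l\}$, with $T_l=r_l(r_l+1)/2$, is a subset sum of $\{1,\dots,r_l\}$. Hence each $sp_l$ with $1\le s\le T_l$ equals $\sum_{k\in F}n_k$ for some $F$ contained in block $l$, and since $\min(F)\to+\infty$ as $l\to+\infty$, the \ipd\ hypothesis gives: for every $\eps>0$ there is $l_0$ with $|\widehat{\sigma}(sp_l)-1|\le\eps$ for all $l\ge l_0$ and all $1\le s\le T_l$.

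Next I convert this into a concentration statement. Pushing $\sigma$ forward by $x\mapsto p_lx$ produces a measure $\mu_l$ which is \emph{again continuous} (the map is $p_l$-to-one and $\sigma$ is non-atomic) and satisfies $\widehat{\mu_l}(s)=\widehat{\sigma}(sp_l)$. Running the Dirichlet-kernel estimate of Section~2 in the form $\sum_{s=1}^{T_l}\bigl(1-\mathrm{Re}\,\widehat{\mu_l}(s)\bigr)=\int_{\T}\bigl(T_l-\sum_{s=1}^{T_l}\cos 2\pi s\phi\bigr)\,d\mu_l$ and bounding the inner sum by $\tfrac1{2\{\phi\}}$ forces $\mu_l$ to place mass $\ge 1-2\eps$ within $1/T_l$ of $0$. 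Equivalently $\sigma(V_l)\ge 1-2\eps$ for $l\ge l_0$, where $V_l=\{x:\{p_lx\}<1/T_l\}$ is a union of $p_l$ arcs of length $\asymp 1/(r_l^2p_l)$ centred at the multiples of $1/p_l$; in particular $\int_{\T}\langle p_lx\rangle^2\,d\sigma\to 0$.

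The heart of the matter is to feed the relation $p_{l+1}=N_lp_l+1$, $N_l=[r_l^2/\gamma_l]$, into a telescoping energy estimate. On one arc of $V_l$ one has $p_{l+1}x\equiv \tfrac{m}{p_l}+p_{l+1}u$ with $|u|\lesssim 1/(r_l^2p_l)$, a range over which $p_{l+1}u$ sweeps $\asymp 1/\gamma_l$ full periods; consequently $V_{l+1}$ meets that arc in a Lebesgue fraction $\asymp 1/r_{l+1}^2$, even though \ipd\ forces $\sigma$ to keep almost all of its mass on $V_{l+1}\cap V_l$. I would introduce a bounded functional $\Phi_l\in[0,1]$ measuring the $L^2(\sigma)$-overlap of the constant function with the algebra generated by $e^{2\pi i p_lx}$ — most naturally a correlation of the continuous auxiliary measure $\rho=\sigma*\widetilde{\sigma}$, whose Fourier coefficients $\widehat{\rho}(n)=|\widehat{\sigma}(n)|^2$ are non-negative — and show that passing from level $l$ to level $l+1$ the non-atomicity of $\sigma$ (Wiener: $\rho$ has no atoms) forces a definite loss $\Phi_l-\Phi_{l+1}\gtrsim \gamma_l^2/r_l$. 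Telescoping would then give $\sum_l \gamma_l^2/r_l\lesssim \Phi_{l_0}-\lim_l\Phi_l\le 1<+\infty$, contradicting the divergence hypothesis and proving the proposition.

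The hard part is exactly this per-step lower bound $\Phi_l-\Phi_{l+1}\gtrsim\gamma_l^2/r_l$: concentration by itself is not enough, because a singular continuous measure can concentrate on an arbitrarily thin set, so one must extract the loss from a genuinely averaged, second-moment mechanism spread over the $r_l$ frequencies of a block rather than from any single frequency. This is where the factor $1/r_l$ enters, and it is also the reason one needs the \emph{stronger} divergence $\sum_l\gamma_l^2/r_l=+\infty$ rather than $\sum_l\gamma_l^2=+\infty$: the delicate combination of the non-negativity of $\widehat{\rho}$, the vanishing of the atoms of $\rho$, and the arithmetic $p_{l+1}=N_lp_l+1$ must be arranged so that the loss is of order $\gamma_l^2/r_l$ and not smaller.
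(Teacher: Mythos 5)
Your opening reductions are sound: since every $s\in\{1,\dots,r_l(r_l+1)/2\}$ is a subset sum of $\{1,\dots,r_l\}$, the IP-Dirichlet hypothesis does force $\hat{\sigma}(sp_l)\to 1$ uniformly over $1\le s\le r_l(r_l+1)/2$, and a Fej\'er/Dirichlet-kernel argument then concentrates $\sigma$ on the set $V_l$ of points $x$ with $\{p_lx\}\lesssim 1/r_l^2$. But the proof stops there. The entire content of the proposition is the step you yourself label ``the heart of the matter'': the functional $\Phi_l$ is never defined beyond a verbal description, the decrement inequality $\Phi_l-\Phi_{l+1}\gtrsim\gamma_l^2/r_l$ is asserted rather than proved, and the one quantitative input you offer for it --- that $V_{l+1}$ meets each arc of $V_l$ in Lebesgue fraction $\asymp 1/r_{l+1}^2$ --- says nothing about the $\sigma$-mass of that intersection, precisely because $\sigma$ may be singular, as you note. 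As it stands the argument is a plan for a proof, not a proof, and it is not clear that the proposed second-moment mechanism can be made to work.

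The paper avoids all of this by never trying to control the measure directly: it proves the purely Diophantine statement $G_2((n_k))=\{1\}$ and then invokes Theorem~\ref{th0} (an IP-Dirichlet measure must give full mass to $G_2((n_k))$), which immediately rules out continuous measures. The pointwise argument for a fixed $\lambda=e^{2i\pi\theta}\neq 1$ in $G_2((n_k))$ runs as follows: convergence of $\sum_l\sum_{j\le r_l}\{j\theta p_l\}^2$ forces $\{\theta p_l\}<1/r_l$ eventually (otherwise the points $\{j\theta p_l\}$, $j\le r_l$, form a $\{\theta p_l\}$-dense net and the block sums cannot tend to $0$), whence $\{j\theta p_l\}=j\{\theta p_l\}$ for $j\le r_l$ and $\sum_l r_l^3|\lambda^{p_l}-1|^2<+\infty$; on the other hand the recursion $p_{l+1}=[r_l^2/\gamma_l]p_l+1$ shows that $|\lambda^{p_l}-1|<(\gamma_l/r_l^2)\delta_l$ for infinitely many $l$ would give $|\lambda^{p_{l+1}}-\lambda|<\delta_l$ and hence $\lambda=1$, so $|\lambda^{p_l}-1|\ge(\gamma_l/r_l^2)\delta_l$ for all large $l$; combining the two bounds yields convergence of $\sum_l\gamma_l^2\delta_l^2/r_l$, which contradicts the divergence hypothesis once $\delta_l\to 0$ is chosen to decay slowly enough. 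If you want to salvage your approach you must actually construct and control $\Phi_l$, but the reduction to $G_2((n_k))=\{1\}$ is both shorter and already available in the paper's framework.
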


 \begin{proof}
  We are going to show that $G_{2}((n_{k}))=\{1\}$. It will then follow from Theorem \ref{th0} that no continuous probability measure on $\T$ can be \mbox{IP-Dirichlet} with respect to the sequence $(n_{k})_{k\ge 1}$.  
Suppose that $\lambda \in\T\setminus\{1\}$ is such that 
\begin{equation}\label{ee3}
\sum_{k\ge 1}|\lambda ^{n_{k}}-1|^{2}=\sum_{l\ge 1}\sum_{j=1}^{r_{l}}|\lambda ^{jp_{l}}-1|^{2}<+\infty.
\end{equation}
Let $C$ be a positive constant such that for each $\theta \in[0,1]$, $\frac{1}{C}\{\theta \}\ge|e^{2i\pi \theta }-1|\ge C\{\theta \}$. Writing $\lambda $ as $\lambda =e^{2i\pi \theta }$, $\theta \in[0,1)$, we have that
\begin{equation}\label{ee9bis}
|\lambda ^{jp_{l}}-1|\ge C\{jp_{l}\theta \}\quad\textrm{for each}\ l\ge 1\ \textrm{and}\ j=1,\dots,r_{l}.
\end{equation}
Now $\{\theta p_{l}\}<1/r_{l}$ for sufficiently large $l$. Else the set $\{\{j\theta p_{l}\}, j=1,\dots,r_{l}\}$ would form a $\{\theta p_{l}\}$-dense net of $[0,1]$, and this would contradict the fact, implied by (\ref{ee3}) and (\ref{ee9bis}), that the quantity $\sum_{j=1}^{r_{l}}\{j\theta p_{l}\}^{2}$ tends to zero as $l$ tends to infinity. Hence, for sufficiently large $l$, $\{j\theta p_{l}\}=j\{\theta p_{l}\}$ for every $j=1,\dots,r_{l}$, and thus the series $\sum_{l\ge 1}\sum_{j=1}^{r_{l}}j^{2}|\lambda ^{p_{l}}-1|^{2}$ is convergent. As $r_{l}$ tends to infinity with $l$, this means that the series 
\begin{equation}\label{ee4}
 \sum_{l\ge 1}r_{l}^{3}\, |\lambda ^{p_{l}}-1|^{2}
\end{equation}
is convergent. 
\par
Let now $(\delta _{l})_{l\ge 1}$ be a sequence of real numbers going to zero so slowly that the series $\sum_{l\ge 1}\frac{1}{r_{l}}\, \gamma _{l}^{2}\, \delta _{l}^{2}$ is divergent. Suppose that $|\lambda ^{p_{l}}-1|<\frac{\gamma _{l}}{r_{l}^{2}}\, \delta _{l}$ for infinitely many $l$. Then,
\[
\bigl|\lambda ^{\bigl[ \frac{r_{l}^{2}}{\gamma _{l}}\bigr]\, p_{l}}-1 \bigr|<\delta _{l}\quad\textrm{for all these}\ l,
\]
and by definition of $p_{l+1}$, $|\lambda ^{p_{l+1}}-\lambda |<\delta _{l}$. Letting $l$ tend to infinity along this set of integers, and remembering that $|\lambda ^{p_{l+1}}-1|\to 0$ as $l$ tends to infinity, we get that $\lambda =1$, which is contrary to our assumption. Hence $|\lambda ^{p_{l}}-1|\ge \frac{\gamma _{l}}{r_{l}^{2}}\, \delta _{l}$ for all integers $l$ sufficiently large. Combining this with (\ref{ee4}), this implies that the series $$\sum_{l\ge 1}r_{l}^{3}\, \frac{\gamma _{l}^{2}}{r_{l}^{4}}\, \delta _{l}^{2}=\sum_{l\ge 1}\frac{1}{r_{l}}\,\gamma _{l}^{2}\, \delta _{l}^{2}$$ is convergent, which is again a contradiction. So $G_{2}((n_{k}))=\{1\}$ and we are done.
 \end{proof}

Consider the sets $\{n_{k}\}$ given by Proposition \ref{Prop7}. With the notation of Proposition \ref{Prop5}, $q_{l}$ is equivalent to $r^{2}_{l}/2$ as $k$ tends to infinity, and the series 
$\sum_{l\ge 1}({q_{l}p_{l}}/{p_{l+1}})^{2}$ is divergent because
$({q_{l}p_{l}}/{p_{l+1}})^{2}$ is equivalent to $\gamma _{l}^{2}/4 $. This shows the optimality of the condition given in Proposition \ref{Prop5}.
\par\smallskip
Looking at the construction of Proposition \ref{Prop7} from a different angle yields an example of a sequence $(n_{k})_{k\ge 1}$ such that $G_{2}((n_{k}))$ is uncountable, but still no continuous probability measure on $\T$ can be \mbox{IP-Dirichlet} with respect to $(n_{k})_{k\ge 1}$. This is Theorem \ref{th1}. 

\section{Proof of Theorem \ref{th1}}

Recall that we aim to construct a strictly increasing sequence $(n_{k})_{k\ge 1}$ of integers such that $G_{2}((n_{k}))$ is uncountable, but no continuous \proba\ \mea\ on $\T$ is \ipd\ \wrt\ the \seq\ $(n_{k})_{k\ge 1}$. This \seq\ $(n_{k})_{k\ge 1}$ will be of the kind considered in the previous section.
Consider first the \seq\ $(p_{l})_{l\ge 1}$ defined by 
$p_{1}=1$ and $p_{l+1}=\frac{l^{2}(l^{2}+1)}{2}p_{l}$ for all  $l\ge 1$. We then define the sequence $(n_{k})_{k\ge 1}$ by setting 
$$\{n_{k}\textrm{ ; }k\ge 1\}=\bigcup_{l\ge 2}\{p_{l},2p_{l},\ldots, l^{2}p_{l}\}.$$
As $l^{2}p_{l}<p_{l+1}$ for all $l\ge 2$, the sets $\{p_{l},2p_{l},\ldots, l^{2}p_{l}\}$ are consecutive sets of integers. Let $(M_{l})_{l\ge 1}$ be the unique sequence of integers such that $\{n_{M_{l-1}+1},\ldots, n_{M_{l}}\}=\{p_{l},2p_{l},\ldots, l^{2}p_{l}\}$ for each $l\ge 2$.
We now know (see for instance \cite{BaGr} or \cite{EG} for a proof) that there exists a perfect uncountable subset $K$ of $\T$ such that
$$|\lambda^{p_{l}}-1|\le C\dfrac{p_{l}}{p_{l+1}}\quad \textrm{ for all } \lambda\in K \textrm{ and } l\ge 2,$$
where $C$ is a positive universal constant. Hence for $\lambda\in K$, $l\ge 2$ and $j\in \{1,\ldots, l^{2}\}$ we have
$$|\lambda^{jp_{l}}-1|\le C\, j\,\dfrac{p_{l}}{p_{l+1}}\le 2C\, l^{2}\,\dfrac{1}{l^{4}}=\dfrac{2C}{l^{2}}\cdot$$
Thus 
$$\sum_{j=1}^{l^{2}}|\lambda^{jp_{l}}-1|^{2}\le l^{2}\,\dfrac{4C^{2}}{l^{4}}=\dfrac{4C^{2}}{l^{2}}\cdot$$
Hence the series $\sum_{l\ge 2}\sum_{j=1}^{l^{2}}|\lambda^{jp_{l}}-1|^{2}$ is convergent for all $\lambda \in K$, that is the series
$\sum_{k\ge 1}|\lambda^{n_{k}}-1|^{2}$ is convergent for all $\lambda \in K$. We have thus proved the first part of our statement, namely that $G_{2}((n_{k}))$ is uncountable.

\par\smallskip

Let now $\sigma$ be a continuous \proba\ \mea\ on $\T$. The proof that $\sigma$ cannot be \ipd\ \wrt\ the \seq\ $(n_{k})_{k\ge 1}$ relies on the following lemma:

\begin{lemma}\label{lem1}
For all $l\ge 2$ and all $s\ge 1$, $sp_{l}$ belongs to the set $$\{\sum_{k\in F}n_{k}\textrm{ ; } F\in \mathcal{F},\;
\min(F)\ge M_{l-1}+1\}.$$ 
\end{lemma}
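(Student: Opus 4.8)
The plan is to exploit the arithmetic identity hidden in the definition of $(p_{l})$. First I would observe that, since $\frac{l^{2}(l^{2}+1)}{2}=1+2+\cdots+l^{2}$, the recurrence $p_{l+1}=\frac{l^{2}(l^{2}+1)}{2}\,p_{l}$ says precisely that
\[
p_{l+1}=\sum_{j=1}^{l^{2}}j\,p_{l},
\]
that is, $p_{l+1}$ is the sum of \emph{all} the elements $p_{l},2p_{l},\dots,l^{2}p_{l}$ making up the $l$-th block $\{n_{M_{l-1}+1},\dots,n_{M_{l}}\}$. This structural fact is what makes the statement work, and spotting it is really the crux; everything else is bookkeeping.

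Next I would record the elementary fact that for any $N\ge 1$ every integer $c$ with $0\le c\le \frac{N(N+1)}{2}$ is a subset sum $\sum_{j\in J}j$ of $\{1,\dots,N\}$ (immediate by a greedy argument, or by induction on $N$). Applied with $N=l^{2}$, this shows that for each $r$ with $0\le r\le \frac{l^{2}(l^{2}+1)}{2}$ the multiple $r\,p_{l}$ is a sum $\sum_{k\in F''}n_{k}$ whose indices $F''$ lie in the $l$-th block, so that $\min(F'')\ge M_{l-1}+1$.

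The representation of a general $s\,p_{l}$ would then be obtained by strong induction on $s$, uniformly over $l\ge 2$. If $s\le \frac{l^{2}(l^{2}+1)}{2}$ the previous paragraph already gives the representation inside the $l$-th block. Otherwise I perform Euclidean division, writing $s=q\,\frac{l^{2}(l^{2}+1)}{2}+r$ with $0\le r<\frac{l^{2}(l^{2}+1)}{2}$ and $q\ge 1$; using the key identity this becomes
\[
s\,p_{l}=q\,p_{l+1}+r\,p_{l}.
\]
The term $r\,p_{l}$ is handled inside the $l$-th block as above, while $q\,p_{l+1}$ falls under the induction hypothesis at level $l+1$: since $\frac{l^{2}(l^{2}+1)}{2}\ge 10$ for $l\ge 2$ we have $q\le s/10<s$, so $q\,p_{l+1}=\sum_{k\in F'}n_{k}$ with $\min(F')\ge M_{l}+1$. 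As the blocks are consecutive disjoint sets of integers, the index sets $F'$ (living in levels $\ge l+1$) and $F''$ (living in level $l$) are disjoint, and $F=F'\cup F''$ provides the desired representation with $\min(F)\ge M_{l-1}+1$.

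The only point requiring genuine care is the termination/induction set-up: the recursion lowers the multiplier from $s$ to $q<s$ while raising the level from $l$ to $l+1$, so a single strong induction on $s$, with $l$ kept as a free parameter, closes the argument, the base case being exactly $s\le \frac{l^{2}(l^{2}+1)}{2}$. I expect the main conceptual obstacle to be recognizing the identity $p_{l+1}=\sum_{j=1}^{l^{2}}j\,p_{l}$; once it is in hand, the subset-sum fact together with the Euclidean-division recursion makes the rest routine.
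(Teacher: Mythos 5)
Your proof is correct and rests on exactly the same two ingredients as the paper's: the subset-sum fact that $\{\sum_{j\in J}j \;;\; J\subseteq\{1,\dots,N\}\}=\{0,1,\dots,\frac{N(N+1)}{2}\}$, and the identity $p_{l+1}=\frac{l^{2}(l^{2}+1)}{2}\,p_{l}$, i.e.\ that the subset sums of the $l$-th block reach exactly up to $p_{l+1}$. The only difference is organizational: the paper chains consecutive blocks forward to show that the reachable sums form an ever-growing initial interval of multiples of $p_{l}$, whereas you run the same chaining backwards as a strong induction on $s$ via Euclidean division; this is the same argument, with your version making the paper's ``continuing in this fashion'' step fully explicit.
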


\begin{proof}[Proof of Lemma \ref{lem1}]
It is clear that for all $n\ge 1$,
$$\{\sum_{j\in F} j\textrm{ ; } F\subseteq\{1,\ldots,n\},\, F\not =\varnothing\}=\{1,\ldots, \frac{n(n+1)}{2}\}.$$ Hence
$$\{\sum_{j\in F} jp_{l}\textrm{ ; } F\subseteq\{1,\ldots,l^{2}\},\, F\not =\varnothing\}=\{p_{l},2p_{l}\ldots, \frac{l^{2}(l^{2}+1)}{2}p_{l}\}, $$ i.e.
$$\{\sum_{k\in F} n_{k}\textrm{ ; } F\subseteq\{M_{l-1}+1,\ldots,M_{l}\},\, F\not =\varnothing\}=\{p_{l},2p_{l}\ldots,p_{l+1}\}.$$
This proves Lemma \ref{lem1} for $s\in \{1,\ldots, \frac{l^{2}(l^{2}+1)}{2}\}$. Then since
$$\{\sum_{k\in F} n_{k}\textrm{ ; } F\subseteq\{M_{l}+1,\ldots,M_{l+1}\},\, F\not =\varnothing\}=\{p_{l+1},2p_{l+1}\ldots,
\frac{(l+1)^{2}((l+1)^{2}+1)}{2}p_{l+1}\},$$ we get that
\begin{eqnarray*}
 \{\sum_{k\in F} n_{k}&\textrm{;}& F\subseteq\{M_{l-1}+1,\ldots,M_{l+1}\},\, F\not =\varnothing\}\\
&=&\{p_{l}, 2p_{l}, \ldots, p_{l+1},p_{l+1}+p_{l},p_{l+1}+2p_{l}, \ldots \\
& &\hspace*{1.9cm} 2p_{l+1},\ldots,
\frac{(l+1)^{2}((l+1)^{2}+1)}{2}p_{l+1}\}
\\
&=&\{p_{l}, 2p_{l}, \ldots,\frac{l^{2}(l^{2}+1)}{2}\,\cdot\,\frac{(l+1)^{2}((l+1)^{2}+1)}{2}p_{l}\}.
\end{eqnarray*}
Continuing in this fashion we obtain that for all $q\ge 1$,
\begin{eqnarray*}
 \{\sum_{k\in F} n_{k}&\textrm{;}& F\subseteq\{M_{l-1}+1,\ldots,M_{l+q}\},\, F\not =\varnothing\}\\
&=&\{p_{l}, 2p_{l}, \ldots,\prod_{j=0}^{q}\frac{(l+j)^{2}((l+j)^{2}+1)}{2}p_{l}\}.
\end{eqnarray*}
The conclusion of Lemma \ref{lem1} follows from this.
\end{proof}

Suppose now that $\sigma$ is \ipd\ \wrt\ $(n_{k})_{k\ge 1}$. Let $l_{0}\ge 2$ be such that for every $F\in \mathcal{F}$ with $\min(F)\ge M_{l_{0}-1}+1$,
$|\hat{\sigma}(\sum_{k\in F}n_{k})|\ge {1}/{2}$.
Then Lemma \ref{lem1} implies that for all $s\ge 1$, $|\hat{\sigma}(sp_{l_{0}})|\ge {1}/{2}$. This contradicts the continuity of the \mea\ $\sigma$.

\section{Additional results and comments}

\subsection{A remark about the Erd\"os-Taylor sequence}
Let $(n_{k})_{k\ge 1}$ be the \seq\ of integers defined by $n_{1}=1$ and $n_{k+1}=kn_{k}+1$ for every $k\ge 1$. This \seq\ is interesting in our context because $G_{1}((n_{k}))=\{1\}$ while $G_{2}((n_{k}))$ is uncountable 
(\cite{ET}, see also \cite{AHL}): if $\lambda\in\T\setminus\{1\}$, there exists a positive constant $\varepsilon$ such that $|\lambda^{n_{k}}-1|\ge\frac{\varepsilon}{k}$ for all $k\ge 1$. Indeed, if for some $k$ we have $|\lambda^{n_{k}}-1|\le\frac{\varepsilon}{k}$, then $|\lambda^{kn_{k}}-1|\le {\varepsilon}$, so that
$|\lambda^{n_{k+1}}-1|\ge |\lambda-1|-{\varepsilon}$. This is a contradiction for $\varepsilon=\frac{1}{2}|\lambda-1|$. Hence if $\lambda\in\T\setminus\{1\}$ the series $\sum_{k\ge 1}|\lambda^{n_{k}}-1|$ is divergent. On the other hand, since the series $\sum_{k\ge 1}({n_{k}}/{n_{k+1}})^{2}$ is convergent, $G_{2}((n_{k}))$ is uncountable. It is proved in \cite{AHL} that there exists a continuous \proba\ \mea\ $\sigma$ on $\T$ which is \ipd\ \wrt\ $(n_{k})_{k\ge 1}$. This statement can also be seen as a consequence of Theorem 2.2 of \cite{K}: it is shown there that there exists a continuous generalized Riesz product $\sigma$ on $\T$ and a $\delta >0$ such that 
$$|\hat{\sigma}(\sum_{k\in F}n_{k})|\ge \delta$$ for every $F\in \mathcal{F}$ such that $\min(F)>4$. It is not difficult to see that this \mea\ $\sigma$ is in fact\ \ipd\ \wrt\ $(n_{k})_{k\ge 1}$. We briefly give the argument below. It can be generalized to all sequences $(n_{k})_{k\ge 1}$ such that the series $\sum_{k\ge 1}(n_{k}/n_{k+1})^{2}$ is convergent, thus yielding another proof of Corollary \ref{Cor4}.
\par\smallskip
The \mea\ $\sigma$ of \cite{K} is constructed in the following way: let $\Delta$ be the function defined for $t\in\R$ by $\Delta(t)=\max(1-6|t|,0)$. If $K$ is the function  $\R$ given by the expression
$$K(t)=\dfrac{1}{2\pi}\left(\dfrac{\sin \frac{t}{2}}{\frac{t}{2}}\right)^{2}, \quad t\in\R$$ and $K_{\alpha}$ is defined for each $\alpha>0$ by $K_{\alpha}(t)=\alpha K(\alpha t)$, $t\in\R$, then $\Delta(x)=\hat{K}_{\frac{1}{6}}(x)$ for every $x\in\R$.
The function $\Delta\ast\Delta$ is a $\mathcal{C}^{2}$ function on $\R$ which is supported on $[-\frac{1}{3},\frac{1}{3}]$, takes positive values on $]-\frac{1}{3},\frac{1}{3}[$, and attains its maximum at the point $0$. Hence its derivative vanishes at the point $0$. Let $a>0$ be such that the function $\varphi=a\Delta\ast\Delta$ satisfies $\varphi(0)=1$. We have also $\varphi'(0)=0$, and so there exists a constant $c\ge 0$ and a $\gamma\in (0,\frac{1}{3})$ such that for all $x$ with $|x|<\gamma$, $\varphi(x)\ge 1-cx^{2}$. Lastly, recall that $\varphi(x)=a\widehat{K^{2}_{\frac{1}{6}}}(x)$ for all $x\in\R$. Consider now the \seq\ $(P_{j})_{j\ge 1}$ of trigonometric polynomials defined on $\T$ in the following way: for $j\ge 1$ and $t\in\R$,
$$P_{j}(e^{it})=\sum_{s\in\Z}\varphi(\dfrac{s}{j})e^{ist}.$$ This is indeed a polynomial of degree at most $\lfloor\frac{j}{3}\rfloor$, since $\varphi(\frac{s}{j})=0$ as soon  as $\frac{s}{j}\ge \frac{1}{3}$. We now claim that $P_{j}$ takes only nonnegative values on $\T$: indeed, consider for each $j\ge 1$ and $t\in \R$ the function $\Phi_{j,t}$ defined by
$\Phi_{j,t}(x)=jK^{2}_{\frac{1}{6}}(j(x+t))$, $x\in\R$. Its Fourier transform is then given by
$\hat{\Phi}_{j,t}(\xi)=e^{i\xi t}\widehat{K^{2}_{\frac{1}{6}}}(\frac{\xi}{j})=e^{i\xi t}\Delta\ast\Delta(\frac{\xi}{j})$. Thus $P_{j}(e^{it})=a\sum_{s\in\Z}\hat{\Phi}_{j,t}(s)$. Applying the Poisson formula to the function $\Phi_{j,t}$, we get that
$P_{j}(e^{it})=2\pi a\sum_{s\in\Z}\Phi_{j,t}(2\pi s)=2\pi a\sum_{s\in\Z} j {K}_{\frac{1}{6}}^{2}(j(2\pi s+t))\ge 0.$ Hence $P_{j}(e^{it})$ is nonnegative for all $t\in\R$, $\hat{P}_{j}(0)=1$ and $\hat{P}_{j}(1)=\varphi(\frac{1}{j})\ge 1-\frac{c}{j^{2}}$ as soon as $j\ge j_{0}$, where $j_{0}=\lfloor\frac{1}{\gamma}\rfloor+1$. Consider then for $m\ge j_{0}$ the nonnegative polynomials $Q_{m}$ defined by 
$$Q_{m}(e^{it})=\prod_{j=j_{0}}^{m}P_{j}(e^{in_{j}t}), \quad t\in\R.$$
We have $\hat{Q}_{m}(0)=1$ for each $m\ge 1$. Moreover, since the degree of $P_{j}$ is less than $\lfloor \frac{j}{3}\rfloor$ and $n_{j+1}>\frac{jn_{j}}{3}$, the polynomials $Q_{m}$ converge in the $w^{*}$-topology to a generalized Riesz product $\sigma$ on $\T$ which is continuous and such that for every set  $F\in \mathcal{F}$ with $\min(F)\ge j_{0}$,
$$\hat{\sigma}(\sum_{k\in F}n_{k})\ge\prod_{k\in F}(1-\frac{c}{k^{2}})\cdot$$
It follows that $\sigma$ is an \ipd\ \mea\ \wrt\ the \seq\ $(n_{k})_{k\ge 1}$.

\subsection{A  sequence $(n_{k})_{k\ge 1}$ \wrt\ which there exists a continuous Dirichlet measure, but such that $G_{\infty}((n_{k}))=\{1\}$}
The examples of \seq s $(n_{k})_{k\ge 1}$ given in \cite{BDLR} and \cite{EG} for which there exists a continuous \proba\ \mea\ $\sigma  $ on $\T$ such that $\hat{\sigma  }(n_{k})\to 1$
as $k\to +\infty$ all share the property that $|\lambda ^{n_{k}}-1|\to 0$ for some $\lambda \in\T\setminus\{1\}$. One may thus wonder whether there exists a sequence $(n_{k})_{k\ge 1}$ \wrt\  which there exists a continuous Dirichlet \proba\ \mea\ $\sigma  $, and such that 
$G_{\infty}((n_{k}))=\{\lambda \in \T \textrm{ ; } |\lambda ^{n_{k}}-1|\to 0\}=\{1\}$. The answer is yes, and an ad hoc \seq\ $(n_{k})_{k\ge 1}$ can be constructed from the Erd\"os-Taylor \seq\ above. Changing notations, let us denote by $(p_{k})_{k\ge 1}$ this \seq\, defined by $p_{1}=1$ and $p_{k+1}=kp_{k}+1$ for each $k\ge 1$. For each integer $q\ge 1$, consider the finite set 
$$\mathcal{P}_{q}=\{\sum_{k\in F}p_{k} \textrm{ ; }F\not =\emptyset, \; F\subseteq\{2^{q}+1,\ldots, 2^{q+1}\}\}.$$
The set $\bigcup_{q\ge 1} \mathcal{P}_{q}$ can be written as $\{n_{k} \textrm{ ; }k\ge 1\}$, where $(n_{k})_{k\ge 1}$ is a strictly increasing \seq\ of integers. Let now $\sigma  $ be a continuous \proba\ \mea\ which is \ipd\ \wrt\ the Erd\"os-Taylor \seq\ $(p_{k})_{k\ge 1}$:
$$\hat{\sigma  }(\sum_{k\in F}p_{k})\to 1 \quad\textrm{ as }\quad \min(F)\to+\infty, \; F\in \mathcal{F}.$$ This implies that $\hat{\sigma  }(n_{k})\to 1$ as $k\to +\infty$. Indeed, let $\varepsilon >0$ and $k_{0}$ be such that $|\hat{\sigma  }(\sum_{k\in F}p_{k})-1|<\varepsilon $ for all $F\in \mathcal{F}$  with $\min(F)\ge k_{0}$. Let $q_{0}$ be such that $2^{q_{0}}+1\ge k_{0}$. Then $|\hat{\sigma  }(n_{k})-1|<\varepsilon $ for all $k$ such that $n_{k}$ belongs to the union $\bigcup_{q\ge q_{0}}\mathcal{P}_{q}$. Since all the sets $\mathcal{P}_{q}$ are finite, $|\hat{\sigma  }(n_{k})-1|<\varepsilon $ for all but finitely many $k$.

It remains to prove that $G_{\infty}((n_{k}))=\{1\}$, and the argument for this is very close to one employed in \cite{AHL}. Let $\varepsilon \in (0,1/16)$ for instance, and suppose that $\lambda \in\T$ is such that $|\lambda ^{n_{k}}-1|<\varepsilon $ for all $k$ larger than some $k_{0}$. We claim then that if $q_{0}$ is such that $2^{q_{0}}+1\ge k_{0}$, then we have for all $q$ larger than $q_{0}$
\begin{equation}\label{ee10}
\sum_{k=2^{q}+1}^{2^{q+1}}|\lambda ^{p_{k}}-1|<2C^{2}\varepsilon ,
\end{equation}
where $C>0$ is a constant such that $\{t\}/C\le |e^{2i\pi t}-1|\le C\{t\}$ for all $t\in\R$.
Indeed, our assumption that $|\lambda ^{n_{k}}-1|<\varepsilon $ for all $k\ge k_{0}$ implies that for all $q\ge q_{0}$ and all disjoint finite subsets $F$ and $G$ of the set $\mathcal{P}_{q}$,
$$\{\sum_{k\in F}p_{k}\theta\}<C\varepsilon , \quad \{\sum_{k\in G}p_{k}\theta \}<C\varepsilon \quad \textrm{and}\quad \{\sum_{k\in F\sqcup G}p_{k}\theta \}<C\varepsilon $$
where $\lambda =e^{2i\pi\theta }$ with $\theta \in [0,1)$ and $F\sqcup G$ denotes the disjoint union of $F$ and $G$. Now the same argument as in \cite[Prop. 1.1]{AHL} yields that
$$\langle\sum_{k\in F\sqcup G}p_{k}\theta\rangle= \langle\sum_{k\in F}p_{k}\theta \rangle +\langle\sum_{k\in G}p_{k}\theta \rangle.$$ Setting
$$A_{q,+}=\{k\in\{2^{q}+1,\ldots 2^{q+1}\} \textrm{ ; }\langle p_{k}\theta \rangle \ge 0\}$$ and $$A_{q,-}=\{k\in\{2^{q}+1,\ldots 2^{q+1}\} \textrm{ ; }\langle p_{k}\theta \rangle < 0\},$$ this implies that 
$$\sum_{k\in A_{q,+}}\{p_{k}\theta \}<C\varepsilon \quad \textrm{and}\quad \sum_{k\in A_{q,-}}\{p_{k}\theta \}<C\varepsilon.$$
Hence
$$
\sum_{k=2^{q}+1}^{2^{q+1}}\{p_{k}\theta \}<2C\varepsilon\quad \textrm{so that}\quad
 \sum_{k=2^{q}+1}^{2^{q+1}}|\lambda ^{p_{k}}-1|<2C^{2}\varepsilon\quad \textrm{for all }q\ge q_{0}.
$$
Suppose now that $\lambda \not =1$, and set $\varepsilon =|\lambda -1|/(4C^{2})$. Then (\ref{ee10}) above implies that there exists an infinite subset 
$E$ of $\N$ such that $|\lambda ^{p_{k}}-1|\le (2C^{2}\varepsilon )/k$ for all $k\in E$. If it were not the case, we would have $|\lambda ^{p_{k}}-1|>(2C^{2}\varepsilon )/k$ for all $k$ large enough, so that
\begin{equation}\label{ee11}
 \sum_{k=2^{q}+1}^{2^{q+1}}|\lambda ^{p_{k}}-1|>2C^{2}\varepsilon\, \sum_{k=2^{q}+1}^{2^{q+1}}\frac{1}{k}\ge 2C^{2}\varepsilon \,\frac{2^{q+1}-2^{q}}{2^{q}}\ge 2C^{2}\varepsilon
\end{equation}
 for all $q$ large enough, which is a contradiction with (\ref{ee10}). This proves the existence of the set $E$. Now for all $k\in E$
$$|\lambda ^{p_{k+1}}-1|\ge |\lambda -1|-|\lambda ^{kp_{k}}-1|\ge |\lambda -1|-k|\lambda ^{p_{k}}-1|\ge 4C^{2}\varepsilon -2C^{2}\varepsilon=2C^{2}\varepsilon  .$$ But this stands again in contradiction with (\ref{ee10}), and we infer from this that $\lambda $ is necessarily equal to $1$. Thus $G_{\infty}((n_{k}))=\{1\}$, and we are done.


\begin{thebibliography}{99999}

\par\bigskip

\bibitem{AHL}
\textsc{J.~Aaronson, M.~Hosseini, M.~Lema\'nczyk,}
\newblock IP-rigidity and eigenvalue groups,
\newblock preprint 2012, arxiv 1203.2257.

\bibitem{BaGr}
\textsc{C.~Badea, S.~Grivaux,}
\newblock Unimodular eigenvalues, uniformly distributed sequences and linear dynamics,
\newblock \emph{Adv. Math.}  \textbf{211}  (2007), pp 766 -- 793.

\bibitem{BDLR}
\textsc{V.~Bergelson, A.~Del Junco, M.~Lema\'nczyk, J.~Rosenblatt,}
\newblock Rigidity and non-recurrence along sequences,
\newblock preprint 2011, arxiv 1103.0905.

\bibitem{EG}
\textsc{T.~Eisner, S.~Grivaux,}
\newblock Hilbertian Jamison sequences and rigid dynamical systems,
\newblock \emph{J. Funct. Anal.}  \textbf{261} (2011), pp 302 -- 329.


\bibitem{ET}
\textsc{P.~Erd\"os, S.~Taylor,}
\newblock On the set of points of convergence of a lacunary trigonometric series and the equidistribution properties of related sequences,
\newblock \emph{Proc. London Math. Soc.}  \textbf{7} (1957), pp 598 -- 615.

\bibitem{HMP2}
\textsc{B.~Host, J. -F.~M\'ela, F.~Parreau,}
\newblock Analyse harmonique des mesures (S\'eminaire de Villetaneuse),
\newblock \emph{Ast\'erisque} \textbf{135/136} (1986), Soci\'et\'e Math\'ematique de France. 

\bibitem{HMP}
\textsc{B.~Host, J. -F.~M\'ela, F.~Parreau,}
\newblock Non singular transformations and spectral analysis of measures,
\newblock \emph{Bull. Soc. Math. Fr.} \textbf{119} (1991), pp 33 -- 90.
 
\bibitem{K}
\textsc{Y.~Katznelson,}
\newblock Sequences of integers dense in the Bohr group,
\newblock \emph{Proc. Roy. Inst. of Tech. Stockholm} (1973), pp 79 -- 86.

\bibitem{Pa}
\textsc{F.~Parreau,}
\newblock Ergodicit\'e et puret\'e des produits de Riesz,
\newblock \emph{Ann. Inst. Fourier} \textbf{40} (1990), pp 391 -- 405.

\bibitem{Queff}
\textsc{M.~Queff\'elec,}
\newblock Substitution dynamical systems -- spectral analysis, second edition,
\newblock \emph{Lecture Notes in Mathematics} \textbf{1294}, Springer-Verlag, Berlin, 2010.

 \end{thebibliography}
\end{document}